\documentclass{article}[12pt]
\usepackage{amssymb}
\usepackage{latexsym}
\usepackage{graphicx}

\textwidth=6.0in \textheight=8.5in \evensidemargin=0in \oddsidemargin=0in \topmargin=0in \topskip=0pt \baselineskip=12pt
\parskip=8pt
\parindent=0.5em

\newtheorem{theorem}{Theorem}

\newtheorem{corollary}[theorem]{Corollary}

\newtheorem{lemma}[theorem]{Lemma}

\newtheorem{proposition}[theorem]{Proposition}

\newenvironment{proof}[1][Proof]{\noindent\textbf{#1.} }{\ \rule{0.5em}{0.5em}}
\def\lab(#1)#2{\put(#1){\makebox(0,0)[c]{#2}}}

\begin{document}

\title{A Flow-dependent Quadratic Steiner Tree Problem in the Euclidean Plane\thanks{This research was supported by an ARC
Discovery Grant.}}

\author{M.~Brazil \and C.~J.~Ras \and  D.~A.~Thomas}

\date{}
\maketitle

\begin{abstract}
We introduce a flow-dependent version of the quadratic Steiner tree problem in the plane. An instance of the problem on a set of embedded
sources and a sink asks for a directed tree $T$ spanning these nodes and a bounded number of Steiner points, such that $\displaystyle\sum_{e \in
E(T)}f(e)|e|^2$ is a minimum, where $f(e)$ is the flow on edge $e$. The edges are uncapacitated and the flows are determined additively, i.e.,
the flow on an edge leaving a node $u$ will be the sum of the flows on all edges entering $u$. Our motivation for studying this problem is its
utility as a model for relay augmentation of wireless sensor networks. In these scenarios one seeks to optimise power consumption -- which is
predominantly due to communication and, in free space, is proportional to the square of transmission distance -- in the network by introducing
additional relays. We prove several geometric and combinatorial results on the structure of optimal and locally optimal solution-trees (under
various strategies for bounding the number of Steiner points) and describe a geometric linear-time algorithm for constructing such trees with
known topologies.\\

\noindent\textit{Keywords}: Power-p Steiner trees, Network flows, Mass-point geometry, Wireless sensor networks

\end{abstract}

\section{Introduction}
Given a set of points $Z$ in a normed plane $\langle \mathbb{R}^2,||\cdot||\rangle$ and a real number $p>0$, the \textit{geometric power-$p$
Steiner tree problem} (or geometric $p$-STP) seeks a finite set of points $S\subset \mathbb{R}^2$ (the \textit{Steiner points}) and a tree
$T=\langle V(T), E(T)\rangle=\langle Z\cup S,E(T)\rangle$ such that $\displaystyle\sum_{uv\in E(T)}||u-v||^p$ is a minimum. For $p>1$ the input
to $p$-STP must include a strategy for bounding the number of Steiner points, without which a minimum solution may not exist. When $p=1$ we
obtain the classical Steiner tree problem \cite{gilbert,hwang}, which has been extensively studied under the rectilinear and the Euclidean
norms. Soukop \cite{soukop} was the first to explore the notion of non-linear networks from a topological point of view; he realised its
importance as a model for the design of transportation or communication systems. The operations research community has studied a very similar
problem in the form of the \textit{non-linear multi-facility location problem}; see for instance \cite{hook}. The $p$-STP, in the form given
above with $||\cdot||$ the Euclidean or rectilinear norm, was introduced by Ganley and Salowe in \cite{ganley,ganley3} for its application to
VSLI routing algorithms.

The $p=2$ case, which we refer to as the \textit{quadratic} STP, is particularly important for transportation problems \cite{white} and for some
wireless network problems. In the latter case this is because most of the energy of the network is utilised during data transmission, and,
furthermore, energy consumption is proportional to the transmission distance raised to an exponent $\alpha\in [2,5]$; see for instance
\cite{karl}. Since $\alpha$, the so-called \textit{path loss exponent}, is greater than $1$, adding a relay between any two communicating nodes
will lead to a reduction in total energy consumed. This leads us to an effective method of reducing power consumption in wireless sensor
networks through \textit{relay augmentation}. In \textit{free-space} $\alpha$ can be shown to be exactly $2$ \cite{gold}; moreover, the
$\alpha=2$ case is important in some real-world sensor network applications when constructive interference applies, such as in beamforming and
communication through corridors \cite{karl}. We therefore solely address the quadratic STP in this paper, but with an additional flow component
that makes for a more realistic model of relay augmentation in wireless networks.

In order to make the above discussion more rigorous we note that the energy consumed by nodes in a wireless sensor network when a single packet
of data is transmitted over a distance $r$ in free space is $\beta r^\alpha+p_{\mathrm{rec}}=\beta r^2+p_{\mathrm{rec}}$, where $\beta$ is a
constant and $p_{\mathrm{rec}}$ is the energy required to receive a packet. Since $p_{\mathrm{rec}}$ is usually small relative to $r^2$ we
simplify and normalise the energy consumption to $\mathbb{E}(uv)=|uv|^2$, where $u$ is the transmitting node, $v$ is the receiving node,
$|\cdot|$ is Euclidean distance, and $uv$ denotes the edge $(u,v)$. In order to model power consumption we must include the rate of data flow
between $u$ and $v$, say $f(uv)$, to get $\mathbb{P}(uv)=f(uv)|uv|^2$. We assume that there is an additive flow function on the nodes, so that
the difference between the flow rate entering a given node and leaving the node equates to the supply rate at the node. It is therefore a simple
matter to calculate $f(uv)$ for any $u,v$ once we have the supply rates at all sensors (sources) and the topology of the network has been given.

The central problem of this paper is referred to as the \textit{flow-dependent quadratic Steiner tree problem} (FQSTP). An instance of the FQSTP
has an input of $n+1$ points in the plane -- i.e., $n$ sources and one sink -- and a supply rate at each source. We are also given a strategy
for bounding the number of Steiner points. The output is a set of Steiner points (satisfying the given bound) in the plane and a tree $T$
interconnecting all nodes such that the sum of $\mathbb{P}(u,v)$ over all $u,v$ is minimised. We can view the FQSTP as a weighted version of the
quadratic Steiner tree problem, where the weights are the flows on the edges. A related flow-dependent problem is the Gilbert arborescence
problem (GAP) \cite{gilbert2,volz}, which also has an additive flow function at the nodes but where the cost of an edge $uv$ is $w(f_{uv})|uv|$,
where $w$ is some increasing concave function. Similar to the FQSTP, the GAP has applications in telecommunications and transportation networks.
The computational complexity of the FQSTP is unknown; in fact, this is true for the complexity of the geometric power-p Steiner tree problem for
any $p$ besides $1$. Ganley \cite{ganley} observes that the methods used for proving NP-hardness of the classical geometric Steiner tree problem
fail for $p>1$ because of the lack of the triangle inequality; however, when the number of Steiner points is bounded by some constant strictly
less than $|Z|-2$ then the quadratic Steiner tree problem can be shown to be NP-hard by reduction from the \textit{geometric dominating set}
problem. Berman and Zelikovsky \cite{zeli} show that the graph version of the $p$-STP (where the Steiner points are restricted to being vertices
of a given graph) is MaxSNP-hard.

Our main contributions in this paper are an analysis of some of the important geometric properties of optimal (or locally optimal) solutions to
the FQSTP, and a new linear time geometric algorithm for the construction of locally minimal trees. This algorithm can be used as a component in
an exact solution to the FQSTP, and we also believe that it will lead to effective pruning techniques for such an exact solution (similar to
those used in the program GEOSTEINER for solving the Euclidean and Rectilinear STP \cite{warme}). We provide a formal definition of the FQSTP in
Section \ref{secNot}. In Section \ref{secBound} we describe various strategies for bounding the number of Steiner points in order to ensure that
a solution exists. Then, in Section \ref{secProp}, we state and prove a number of structural and geometric results on locally minimal solutions
to the FQSTP for each of the afore-mentioned bounding methods. Finally, Section \ref{secTop} presents our geometric linear-time algorithm for
constructing locally minimal trees of a given topology.

\section{Problem definition and notation}\label{secNot}
Let $Z\cup \{z_{\mathrm{BS}}\}$ be a set of $n$ \textit{sources} and a \textit{sink} (or \textit{base station}) $z_{\mathrm{BS}}$ embedded in
the plane; we assume that $Z\neq \emptyset$ and $z_{\mathrm{BS}}\notin Z$. We assume that there is a supply  $w(z_i)>0$ associated with each
member of $z_i\in Z$ and a demand $\displaystyle\sum_{z_i\in Z}w(z_i)$ associated with $z_{\mathrm{BS}}$. These supplies and demand determine
the flow on the resulting tree. Let $S$ be any finite set of additional points in the plane, referred to as the \textit{Steiner points}. A
directed tree $T$ spanning $Z\cup S\cup \{z_{\mathrm{BS}}\}$ is a \textit{flow-dependent quadratic Steiner tree} (FQST) if and only if

\begin{enumerate}
    \item Every edge $e$ of $T$ is labelled by a positive real number $f(e)$, its \textit{flow},
    \item $T$ is directed towards $z_{\mathrm{BS}}$ such that every node $z$ of $T$, except the sink, has exactly one outgoing edge $e_z^+$
    (\textit{out-edge}) with flow $f_z^+=f(e_z^+)$ and possibly some incoming edges (\textit{in-edges}) with sum of flows
    $f_z^-$. The sink has no out-edges, but has at least one in-edge,
    \item For the sink we have $f_{\mathrm{BS}}^+=0$ and $f_{\mathrm{BS}}^-=\displaystyle\sum_{z_i\in Z}w(z_i)$,
    \item For each source $z$ we have $f^+_{z}-f^-_z=w(z)$,
    \item For each Steiner point $s$, $f^+_s=f^-_s$,
    \item The \textit{cost} of $T$ is $L(T) :=\displaystyle\sum_{e_i\in E(T)} f(e_i)\vert e_i \vert^2$, where $E(T)$ is the edge-set of $T$.
\end{enumerate}

The objective of the FQSTP is to minimise $L(T)$ over all FQSTs. The decision variables for this problem are the number and locations of the
Steiner points and the topology of the tree interconnecting all points. An FQST minimising $L(T)$ will exist if and only if $\vert S\vert$ is
bounded, and below we discuss various strategies for doing so. An optimal solution will be referred to as a \textit{minimum flow-dependent
quadratic Steiner tree} (MFQST). Throughout this paper we assume that $w(z_i)=1$ for all $z_i\in Z$, but all our results can be generalised to
any positive $w(z_i)$.

It should be noted that points (3)-(5) in the definition of an FQST define an \textit{additive flow} function on the nodes. This means that the
flows from the sources are, in some sense, independent of each other; this fact leads to a sharing of some properties between MFQSTs and
shortest path trees. The wireless network analog of this is that data aggregation (for instance compression) does not take place at the nodes.

For any node $z$ the neighbours of $z$ incident to the in-edges of $z$ will be called $z$'s \textit{in-neighbours}, and we have a similar
definition for the \textit{out-neighbour} of $z$, which we sometimes refer to as the \emph{local sink} of $z$. The \textit{degree} of a node is
the number of edges incident to that node, and its \textit{in-degree} is the number of in-edges incident to it. We denote an edge or a line
segment connecting points $u$ and $v$ by $uv$, and its Euclidean length by $|uv|$. The familiar notation $||u||$ is used for the length of $u$
considered as a vector, i.e., $||u||=\sqrt{u_x^2+u_y^2}$ where $u=(u_x,u_y)$.

Any tree network $T$ interconnecting some or all of the nodes of $Z\cup S\cup\{z_{\mathrm{BS}}\}$ induces a \textit{tree topology}
$\mathcal{T}$, which is simply the labelled graph corresponding to the network $T$. If, in $\mathcal{T}$, every $z\in Z\cup \{z_{\mathrm{BS}}\}$
is of degree one and every $s\in S$ is of degree larger than one then $\mathcal{T}$ is a \textit{full topology}. Since Steiner points are never
of degree one in a MFQST, the edge set of any $\mathcal{T}$ induced by an MFQST on  $Z\cup S\cup \{z_{\mathrm{BS}}\}$ can be partitioned such
that every member of the partition induces a full topology. A tree is called \textit{degenerate} if and only if it has at least one edge of zero
length.

\section{Bounding the number of Steiner points}\label{secBound}
As stated before, an MFQST on a set of nodes will exist if and only if there is a bound on $|S|$. To see this suppose first that there is no
bound on $|S|$. We can reduce the cost of any FQST on the given nodes by adding one or more degree-two Steiner point to any edge; see Fig.
\ref{figBead}. Note that the total cost of such a  (straight-line) path is
$(p+1)\left(\displaystyle\frac{|uv|}{p+1}\right)^2f(uv)=\displaystyle\frac{|uv|^2f(uv)}{p+1}$ where $u,v$ are the end-points of the path, and
the $s_i$ are equally spaced degree-two Steiner points (in Fig. \ref{figBead}, and throughout, the sources and sink are shown as filled circles
and Steiner points are open circles). Since, if there is no bound on $|S|$, we can keep adding degree-two Steiner points, there is no optimal
solution; in fact, the cost of the tree will tend to zero as the number of Steiner points increases. On the other hand, if $|S|$ is bounded then
there are a finite number of possible tree topologies interconnecting the nodes, with each topology obtaining a unique minimum (as we show in
the next section). Therefore an optimal solution must exist.

\begin{figure}[htb]
\begin{center}
    \includegraphics[scale=0.55]{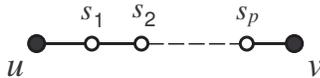}\\

  \caption{Total cost of path is $\frac{|uv|^2f(uv)}{p+1}$}
  \label{figBead}
\end{center}
\end{figure}

A popular method of bounding the number of Steiner points in the power-$p$ Steiner tree problem, $p>1$, is to restrict the degrees of Steiner
points to be greater than two \cite{zeli,ganley,ganley2,soukop}. It is clear that we then have an implicit upper bound of $k=|S|\leq n-1$.
Alternatively we could introduce a cost on the Steiner points. This latter bound is the one that is most relevant to the application motivating
this paper, namely wireless sensor network deployment \cite{wang2,xin,xu}. In this scenario the Steiner points correspond to transmitting
relays, which have an (often significant) associated cost. In practice, there may also be a need to impose edge-length bounds on the network,
but we do not study such bounds in this paper.

Let $T$ be an FQST. In summary, we consider the following three $|S|$-bounding strategies.

\begin{enumerate}
    \item The \textit{degree bound} stipulates a fixed value $\phi\geq 3$ such that $\phi\leq \deg s$ for any Steiner point $s$ in $T$.
    \item The \textit{explicit bound} stipulates a fixed upper-bound of $k$ for $|S|$. It should be clear then that $|S|=k$ since adding a Steiner
    point always leads to an improvement in cost.
    \item The \textit{node-weighted} version of the problem assigns a weight $c>0$ to every Steiner point. The objective of the
    \textit{node-weighted} FQSTP is then to minimise $L_c(T):=\displaystyle\sum_{e\in E(T)} f(e)\vert e \vert^2+c|S|$.
\end{enumerate}

\section{Properties of locally minimal FQSTs}\label{secProp}
Here we consider combinatorial and geometric structural properties of MFQSTs and locally minimal (with respect to a given topology) FQSTs. In
the first subsection we describe general properties that hold true under any $|S|$-bounding strategy. We then look at properties relating
specifically to degree-bounded FQSTs, and finally we look at properties for FQSTs that are not degree-bounded.

\subsection{General properties}
Suppose that we are given a set of points $Z \cup\{z_{\mathrm{BS}}\}$ embedded in $\mathbb{R}^{2}$, a set $S=\{s_1,...,s_k\}$ of (free) Steiner
points and a  topology $\mathcal{T}$ interconnecting $Z\cup S\cup\{z_{\mathrm{BS}}\}$. For any $\mathbf{u}=\langle u_1,...,u_k\rangle\in
\mathbb{R}^{2k}$ (where each $u_i \in \mathbb{R}^2$) let $T_{\mathbf{u}}$ be the tree with topology $\mathcal{T}$ that is obtained by embedding
Steiner point $s_i$ at location $u_i$ for every $i$. We wish to minimise $L(T_{\mathbf{u}})=\displaystyle\sum_{e\in E(T_{\mathbf{u}})}f(e)|e|^2$
over all $\mathbf{u}\in \mathbb{R}^{2k}$. Such a minimum is unique, when $T_{\mathbf{u}}$ is non-degenerate, by the strict convexity of
$L(T_{\mathbf{u}})$ (noting that it is a sum of strictly convex functions). A \textit{locally minimal tree} with respect to $\mathcal{T}$ is a
tree $T_\mathbf{u}$ of topology $\mathcal{T}$ minimising $L(T_\mathbf{u})$. Clearly any MFQST is locally minimal with respect to its topology.

Now let $Z=\{z_1,...,z_n\}$ be any set of points in the Euclidean plane, and suppose that we associate a mass $f_i$ with $z_i$ for each $i\in I
:= \{1, \ldots , n\}$. We use the familiar mass-point geometry notation $f_iz_i$ to refer to this weighted point, and we denote the
\textit{centre of mass} of the system of points $M=\{f_iz_i\:i\in I\}$ by $\mathcal{C}(M)$ or (by a slight misuse of notation)
$\mathcal{C}(f_1z_1,...,f_nz_n)$.

\begin{figure}[htb]
\begin{center}
    \includegraphics[scale=0.5]{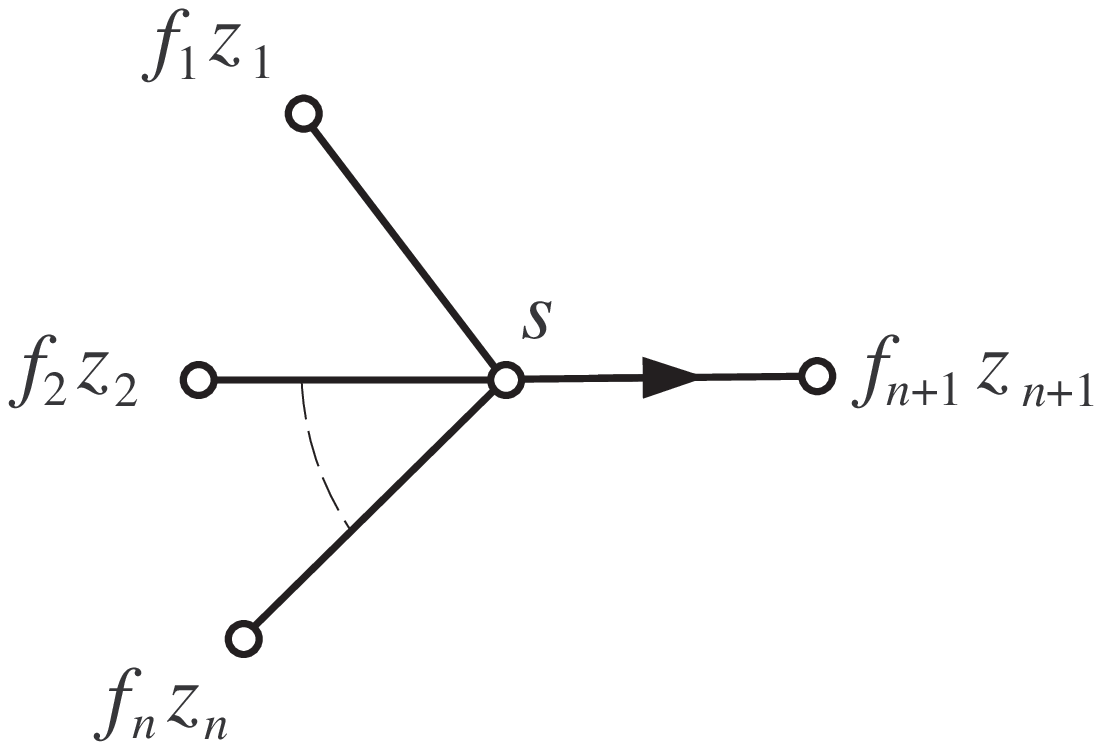}\\

  \caption{$s=\mathcal{C}(f_1z_1,...,f_{n+1}z_{n+1})$, where $f_{n+1}=\displaystyle\sum_{i\leq n} f_i$}
  \label{fig3}
  \end{center}
\end{figure}

\begin{proposition}\label{propCen}Suppose that a Steiner point $s$ in a locally minimal FQST $T$ has in-neighbours $z_1,...,z_n$ providing respective
flows $f_1,...,f_n$, and out-neighbour $z_{n+1}$ such that $f_{n+1}:=f(e_s^+)=\displaystyle\sum_{i\leq n} f_i$ (see Fig. \ref{fig3}). Then
$s=\mathcal{C}(f_1z_1,...,f_{n+1}z_{n+1})$.
\end{proposition}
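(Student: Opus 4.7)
The plan is to exploit the strict convexity of $L(T_{\mathbf{u}})$ together with the fact that, once the topology is fixed, the flows on all edges of the tree are determined by the supply/demand conditions (3)--(5) and are therefore independent of where the Steiner point $s$ is embedded. This reduces the claim to a first-order optimality computation for a strictly convex quadratic function on $\mathbb{R}^2$.

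First, I would isolate the part of the cost that depends on the position of $s$. Since the flows on the edges incident to $s$ do not change when $s$ is moved, the only $s$-dependent contribution to $L(T)$ is
\begin{equation*}
L_s(s) \;=\; \sum_{i=1}^{n} f_i \, |s-z_i|^2 \;+\; f_{n+1}\, |s-z_{n+1}|^2,
\end{equation*}
while the remaining summands in $L(T)$ are constants with respect to $s$. Local minimality of $T$ with respect to its topology forces $s$ to be a minimizer of $L_s$ (if moving $s$ alone strictly decreased $L_s$, it would strictly decrease $L(T)$, contradicting local minimality).

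Next, I would take the gradient of $L_s$ with respect to $s$ and set it equal to zero. Using $\nabla_s |s-z_i|^2 = 2(s-z_i)$, the first-order condition becomes
\begin{equation*}
2\sum_{i=1}^{n+1} f_i (s - z_i) \;=\; 0,
\end{equation*}
which, since $\sum_{i=1}^{n+1} f_i = 2 f_{n+1} > 0$, rearranges to
\begin{equation*}
s \;=\; \frac{\sum_{i=1}^{n+1} f_i z_i}{\sum_{i=1}^{n+1} f_i} \;=\; \mathcal{C}(f_1 z_1,\ldots,f_{n+1}z_{n+1}).
\end{equation*}
Finally, I would note that $L_s$ is a sum of strictly convex functions of $s$ and hence is itself strictly convex, so this critical point is the unique minimizer and no further verification is required.

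There is no real obstacle in this argument; the only subtlety worth flagging explicitly is the observation that the flow values $f_1,\ldots,f_{n+1}$ are topological invariants of $T$ and therefore remain fixed as $s$ varies in the plane. This is what allows the differentiation to be carried out with the $f_i$ treated as constants, and it is exactly the property that makes the mass-point interpretation natural.
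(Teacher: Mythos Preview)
Your proof is correct and follows essentially the same approach as the paper: both derive the first-order optimality condition for the position of $s$ and recognize the resulting equation as the weighted-mean (centre of mass) formula. The only cosmetic difference is that the paper parametrizes by a directional perturbation $s\mapsto s+tu$ and sets $\psi_u'(0)=0$ for every unit vector $u$, whereas you take the gradient directly; these are equivalent.
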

\begin{proof}Let $I=\{1,...,n\}$. Suppose that we perturb the Steiner point $t$ units away from $s$ in the direction of the unit vector
$u$ and let the resultant tree be $T_0$. Then $$\psi_{u}(t):=L(T_0)= \displaystyle\sum_{i\in I}f_i\vert\vert
s+tu-z_i\vert\vert^2+f_{n+1}\vert\vert s+tu-z_{n+1}\vert\vert^2
$$ attains a minimum at $t=0$. But
$$\psi^\prime_{u}(t)=2\displaystyle\sum_{i\in I}f_i.\langle u,s+tu-z_i\rangle+
2f_{n+1}.\langle u,s+tu-z_{n+1}\rangle$$ and therefore
$$\psi^\prime_{u}(0)=2\displaystyle\sum_{i\in I}f_i.\langle u,s-z_i\rangle+
2f_{n+1}.\langle u,s-z_{n+1}\rangle=0.$$ Therefore $$\left\langle u,\displaystyle\sum_{i\in I}f_i(s-z_i)\right\rangle=\left\langle
u,f_{n+1}(z_{n+1}-s)\right\rangle.$$ Since this holds for any unit vector $u$ we get
$$\displaystyle\sum_{i\in I}f_i(s-z_i)=f_{n+1}(z_{n+1}-s).$$ Therefore
$$s=\displaystyle\frac{\displaystyle\sum_{i\in
I}f_iz_i+f_{n+1}z_{n+1}}{\displaystyle\sum_{i\in I}f_i+f_{n+1}}$$ and the result follows since this is an expression for the weighted mean.
\end{proof}

According to the method of mass-point geometry \cite{cox}, any point $\mathcal{C}(f_1z_1,...,f_nz_n)$ can be constructed geometrically by
recursively merging masses and subdividing line segments into appropriate ratios. Consider for instance the construction of
$\mathcal{C}(f_1z_1,f_2z_2,(f_1+f_2)z_3)$ where $f_1,f_2>0$ and the $z_i$ are any three points in the plane, as illustrated in
Fig.~\ref{figMerge}. We first merge $f_1z_1$ and $f_2z_2$ into the point $(f_1+f_2)z_{1,2}$ where $z_{1,2}$ is a point on the line segment
$z_1z_2$ such that $\displaystyle\frac{|z_1z_{1,2}|}{|z_2z_{1,2}|}=\frac{f_1}{f_2}$. Merging $(f_1+f_2)z_{1,2}$ and $(f_1+f_2)z_3$ yields the
point $s=\displaystyle\frac{z_{1,2}+z_3}{2}$ since $z_{1,2}$ and $z_3$ have equal masses. We extend this merging method in Section \ref{secTop}
in order to construct locally minimal FQSTs for more general topologies.

\begin{figure}[htb]
\begin{center}
    \includegraphics[scale=0.45]{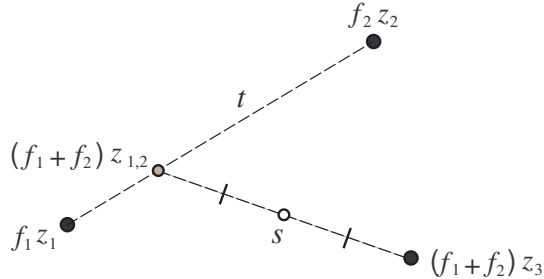}\\

  \caption{Merging masses to construct a Steiner point $s$. Here $t=\displaystyle\frac{f_1}{f_1+f_2}|z_1z_2|$.}
  \label{figMerge}
  \end{center}
\end{figure}

The reasoning behind the following corollary should now be obvious.

\begin{corollary}\label{corMid}Any Steiner point in a locally minimal FQST lies at the mid-point of its out-neighbour and the centre of mass of its
in-neighbours, where masses are assigned to the neighbours of the Steiner point as in Proposition \ref{propCen}.
\end{corollary}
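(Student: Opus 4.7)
The plan is to reduce the corollary to Proposition \ref{propCen} via a single mass-point merge. By the proposition, any Steiner point $s$ with in-neighbours $z_1,\dots,z_n$ carrying flows $f_1,\dots,f_n$ and out-neighbour $z_{n+1}$ satisfies $s=\mathcal{C}(f_1z_1,\dots,f_nz_n,f_{n+1}z_{n+1})$, where $f_{n+1}=\sum_{i\le n}f_i$. So it suffices to show that this particular centre of mass can be rewritten as the midpoint of $z_{n+1}$ and $c:=\mathcal{C}(f_1z_1,\dots,f_nz_n)$.

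First, I would invoke the associativity of the centre-of-mass operation (the basic merging rule used in the discussion surrounding Fig.~\ref{figMerge}): the system $\{f_1z_1,\dots,f_nz_n\}$ can be replaced by a single weighted point $Mc$ at its centre of mass, where $M=\sum_{i\le n}f_i$, without altering the centre of mass of the combined system. Hence
\[
s=\mathcal{C}(Mc,\,f_{n+1}z_{n+1}).
\]
Next I would observe the key arithmetic identity $M=f_{n+1}$, which is exactly the flow-conservation condition $f^+_s=f^-_s$ at the Steiner point. Thus the two merged masses are equal.

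Finally, whenever two weighted points have equal mass, their centre of mass is simply their midpoint:
\[
s=\frac{Mc+f_{n+1}z_{n+1}}{M+f_{n+1}}=\frac{c+z_{n+1}}{2},
\]
which is the required statement.

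There is no real obstacle: the whole argument is an elementary algebraic manipulation of the formula supplied by Proposition \ref{propCen}, combined with flow conservation at $s$. The only point worth being careful about is justifying the ``merge'' step, but this is precisely the mass-point principle already cited from \cite{cox} and illustrated in Fig.~\ref{figMerge}, so it can be invoked without further ado.
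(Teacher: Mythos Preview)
Your argument is correct and is exactly the reasoning the paper has in mind: the corollary is stated as an immediate consequence of Proposition~\ref{propCen} together with the mass-merging principle illustrated in Fig.~\ref{figMerge}, and the paper gives no separate proof beyond remarking that the result ``should now be obvious.'' Your write-up simply makes that obvious step explicit.
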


Fig. \ref{figRel} shows an example of a locally minimal tree on four sources. Note the length ratios that result from the previous corollary; in
particular, $t=\mathcal{C}(1z,3s_2)$, and $s_1$ lies at the midpoint of $t$ and $z^\prime$.

\begin{figure}[htb]
\begin{center}
    \includegraphics[scale=0.5]{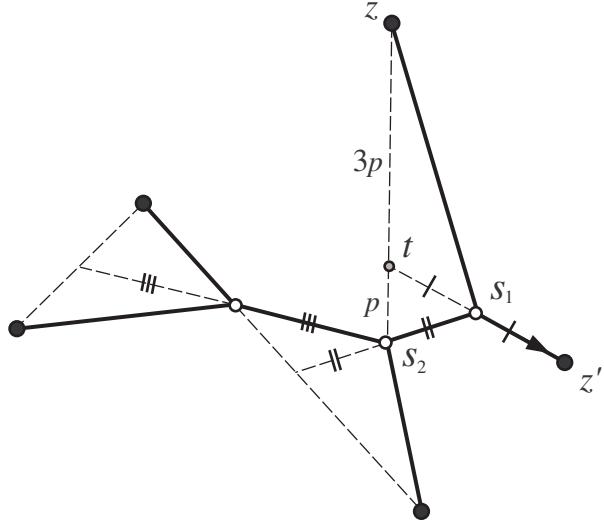}\\

  \caption{A locally minimal tree on four sources and a sink}
  \label{figRel}
  \end{center}
\end{figure}

Next we show that the converse of Proposition \ref{propCen} is also true. The proof is similar to Ganley's proof for quadratic Steiner trees
\cite{ganley}.

\begin{theorem}\label{thrIff}An FQST is locally minimal if and only if every Steiner point lies at the centre of mass of its neighbours.
\end{theorem}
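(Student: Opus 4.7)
The plan is to invoke Proposition \ref{propCen} for the forward implication, and to establish the converse by a convexity argument. Fix the topology $\mathcal{T}$ and parametrise the Steiner-point positions by $\mathbf{u}=(u_1,\ldots,u_k)\in\mathbb{R}^{2k}$ as in the preamble to this subsection. The cost
\[
L(T_{\mathbf{u}}) \;=\; \sum_{e\in E(T_{\mathbf{u}})} f(e)\,|e|^2
\]
is a sum of convex quadratics in the $u_i$ (each edge contributes a term of the form $f(e)\|u_i-u_j\|^2$ or $f(e)\|u_i-z\|^2$ with $z$ fixed), hence is convex in $\mathbf{u}$; the opening paragraph of this subsection already observes that it is strictly convex whenever $T_{\mathbf{u}}$ is non-degenerate. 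Consequently, any critical point of $L$ is a global minimiser over $\mathbb{R}^{2k}$, which by definition means the corresponding tree is locally minimal with respect to $\mathcal{T}$.

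The second step is to compute the partial gradient of $L$ with respect to $u_i$ for each Steiner point $s_i$. Only the edges incident to $s_i$ contribute: if the in-neighbours of $s_i$ are $z_1,\ldots,z_n$ with flows $f_1,\ldots,f_n$ and its out-neighbour is $z_{n+1}$ with flow $f_{n+1}=\sum_{j\le n} f_j$, then
\[
\nabla_{u_i} L \;=\; 2\sum_{j=1}^{n+1} f_j\,(u_i-z_j),
\]
which vanishes precisely when $u_i = \bigl(\sum_j f_j z_j\bigr)/\bigl(\sum_j f_j\bigr)$, i.e., exactly when $s_i$ lies at the centre of mass of its neighbours (weighted as in Proposition \ref{propCen}). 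Thus the centre-of-mass condition at every Steiner point is equivalent to $\nabla L(\mathbf{u})=0$, and by convexity this forces $T_{\mathbf{u}}$ to be locally minimal. This is essentially the calculation from Proposition \ref{propCen} read in reverse, using convexity to upgrade a first-order condition to a minimum.

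The main subtlety I anticipate is the degenerate case, where strict convexity may fail (for example two Steiner points coinciding). However, $L$ remains convex and the vanishing of $\nabla L$ still suffices for global (and hence local) minimality, so the argument is unaffected. A minor point worth flagging is that the neighbours of $s_i$ may themselves be Steiner points whose positions are also variables of $L$; this causes no difficulty because $\nabla_{u_i} L$ depends only on edges incident to $s_i$, and the centre-of-mass hypothesis is imposed simultaneously at every $s_i$.
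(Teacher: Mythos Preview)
Your argument is correct and takes a genuinely different route from the paper. The paper establishes sufficiency by writing the centre-of-mass conditions as a linear system $A\mathbf{s}=\mathbf{b}$ in the Steiner-point coordinates and then arguing that $A$ is (irreducibly) diagonally dominant, hence non-singular; since a locally minimal tree exists and satisfies these same equations by Proposition~\ref{propCen}, uniqueness of the solution forces any configuration satisfying the centre-of-mass conditions to coincide with the locally minimal one. Your approach instead exploits convexity directly: you observe that $L$ is a convex quadratic in $\mathbf{u}$, compute that the centre-of-mass conditions are exactly $\nabla L=0$, and conclude immediately that any critical point is a global minimiser. This is shorter and avoids the linear-algebra detour; it also sidesteps the need to invoke existence of a minimiser as a separate ingredient. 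What the paper's approach buys, on the other hand, is an explicit algebraic characterisation of the Steiner-point locations as the unique solution of a well-conditioned linear system, which the authors later cite (Section~\ref{secTop}) as the basis of an algebraic linear-time algorithm. Your remark that convexity (rather than strict convexity) already suffices in the degenerate case is well taken and cleanly handles the only potential gap.
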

\begin{proof}
We only need to prove sufficiency. Let $T$ be an FQST with Steiner points $s_1,...,s_p$, $p>0$. Let the in-neighbours of $s_i$ be
$x_i^1,...,x_i^{d_i}$, providing flows $f_i^1,...,f_i^{d_i}$, and let the out-neighbour of $s_i$ be $x_i^\prime$ which receives a flow of
$f(e^+_{s_i})=\displaystyle\sum_{j\leq d_i} f_i^j$. Furthermore, suppose that every Steiner point of $T$ is at the centre of mass of its
neighbours, so that
\[s_i=\displaystyle\frac{\displaystyle\sum f_i^jx_i^j+x_i^\prime\sum f_i^j}{2\displaystyle\sum f_i^j}\]
where all three sums are for $1\leq j\leq d_i$. Therefore

\begin{equation}\label{eq1}
2s_i\displaystyle\sum f_i^j-\displaystyle\sum f_i^jx_i^j-x_i^\prime\sum f_i^j=0.
\end{equation}

Let $A$ be the $p\times p$ square matrix containing the coefficients of the $s_i$ in the previous equation; note that some (or all) of the
$x_i^j$ or $x_i^\prime$ may be Steiner points. This creates a system of linear equations $A\mathbf{s}=\mathbf{b}$, where $\mathbf{s}=\langle
s_1,...,s_p\rangle$, and $\mathbf{b}$ is a constant derived from the fixed locations of the sources and the sink. By Proposition \ref{propCen}
one of the solutions to this system must produce a locally minimal tree. However, by Equation (\ref{eq1}) the diagonal entry in the $i$th row of
$A$ is $2\displaystyle\sum f_i^j$, which has magnitude strictly larger than any other entry of row $i$. Hence $A$ is diagonally dominant and
therefore non-singular by the Levy–-Desplanques theorem; see for example \cite{horn}.
\end{proof}

The next lemma is an interesting result on adjacent Steiner points in locally minimal FQSTs. The lemma is also useful for proofs in Sections
\ref{dBound} and \ref{secTop}. Let $s_0,s_1$ be any two adjacent Steiner points of a locally minimal FQST $T$, where $s_1,z_1,...,z_j$ are the
neighbours of $s_0$; $s_0,z_{j+1},...,z_{p+1}$ are the neighbours of $s_1$; and $z_{p+1}$ is the local sink. Once again let $f_i$ be the flow
associated with the edge incident to $z_i$. Let $I=\{1,...,p+1\}$, $J=\{1,...,j\}$ for $j\leq p$, and $\overline{J}=I-J$. Let
$C_J=\mathcal{C}(\{f_iz_i\}_{i\in J})$ and $F_J=\displaystyle\sum_{i\in J}f_i$, and similarly for $C_{\overline{J}}$ and $F_{\overline{J}}$.

\begin{figure}[htb]
\begin{center}
    \includegraphics[scale=0.5]{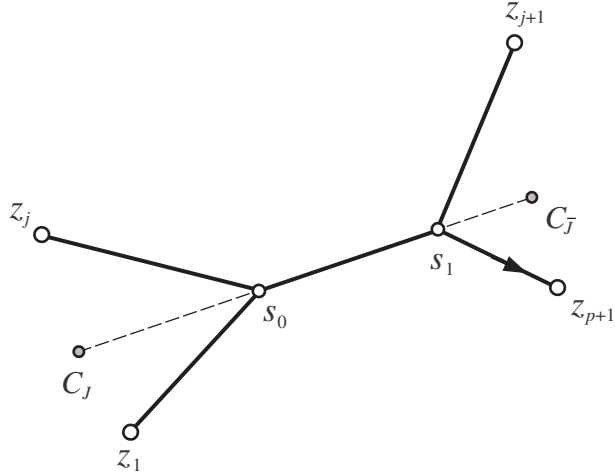}\\

  \caption{The points $C_J,s_0,s_1,C_{\overline{J}}$ are collinear}
  \label{figColl}
\end{center}
\end{figure}

\begin{lemma}With the above notation and definitions, the points $C_J,s_0,s_1,C_{\overline{J}}$ are collinear and subdivide the line segment
$C_JC_{\overline{J}}$ into a ratio $F_{\overline{J}}:F_{\overline{J}}:F_J$.
\end{lemma}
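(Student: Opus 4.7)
The plan is to apply Proposition \ref{propCen} at each of $s_0$ and $s_1$ in turn, rewriting the resulting centre-of-mass expressions in terms of the aggregated centres $C_J$ and $C_{\overline{J}}$, and then reading off collinearity and the subdivision ratio from the two convex-combination formulas.

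First I would invoke flow conservation at $s_0$ to note that the out-edge $s_0 s_1$ carries flow exactly $F_J$, and apply Proposition \ref{propCen} to obtain
\[
s_0 \;=\; \frac{\sum_{i\in J} f_i z_i + F_J\, s_1}{2 F_J} \;=\; \frac{F_J\, C_J + F_J\, s_1}{2 F_J} \;=\; \frac{C_J + s_1}{2},
\]
so $s_0$ is the midpoint of $C_J s_1$. This immediately gives $C_J$, $s_0$, $s_1$ collinear and $|C_J s_0| = |s_0 s_1|$.

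Next I would apply Proposition \ref{propCen} at $s_1$. Its in-neighbours are $s_0$ (with incoming flow $F_J$) and $z_{j+1},\ldots,z_p$, and its out-neighbour is $z_{p+1}$ with out-flow $f_{p+1} = F_J + \sum_{i=j+1}^{p} f_i$ by flow conservation. The numerator of the centre-of-mass expression for $s_1$ collapses to $F_J\, s_0 + F_{\overline{J}}\, C_{\overline{J}}$, since $\sum_{i=j+1}^{p} f_i z_i + f_{p+1} z_{p+1} = F_{\overline{J}}\, C_{\overline{J}}$. For the denominator I compute
\[
2 f_{p+1} \;=\; 2 F_J + 2\sum_{i=j+1}^{p} f_i \;=\; F_J + \Bigl(F_J + \sum_{i=j+1}^{p} f_i\Bigr) + \sum_{i=j+1}^{p} f_i \;=\; F_J + F_{\overline{J}},
\]
yielding $s_1 = (F_J\, s_0 + F_{\overline{J}}\, C_{\overline{J}})/(F_J + F_{\overline{J}})$. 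Hence $s_1$ lies on segment $s_0 C_{\overline{J}}$ and divides it so that $|s_0 s_1| : |s_1 C_{\overline{J}}| = F_{\overline{J}} : F_J$.

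Combining the two formulas, the line through $s_0$ and $s_1$ contains both $C_J$ (from the first step) and $C_{\overline{J}}$ (from the second), giving the four-point collinearity. Chaining the two ratios produces $|C_J s_0| : |s_0 s_1| : |s_1 C_{\overline{J}}| = F_{\overline{J}} : F_{\overline{J}} : F_J$, as required. The only step that requires genuine care is the identification $2 f_{p+1} = F_J + F_{\overline{J}}$, which is not a one-line consequence of the definitions but follows from flow conservation at $s_1$ together with the fact that $z_{p+1}$ contributes to $F_{\overline{J}}$; everything else is a direct rewriting of Proposition \ref{propCen} in mass-point form.
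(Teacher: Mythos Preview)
Your proof is correct and follows essentially the same approach as the paper: apply Proposition~\ref{propCen} at each of $s_0$ and $s_1$, aggregate the in-neighbours into $C_J$ and $C_{\overline{J}}$, and read off the collinearity and ratio. The paper compresses this into a single sentence asserting the two centre-of-mass identities, whereas you spell out the flow-conservation step $2f_{p+1}=F_J+F_{\overline{J}}$ that makes the denominator at $s_1$ come out right; your added detail is welcome and the argument is sound.
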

\begin{proof}The lemma is illustrated in Fig.~\ref{figColl}. Note that $s_1=\mathcal{C}\left(F_Js_0,F_{\overline{J}}C_{\overline{J}}\right)$ and
$s_0=\mathcal{C}(F_JC_J,F_{\overline{J}}s_1)$, from which the result follows.
\end{proof}

\subsection{Properties of degree-bounded MFQSTs}\label{dBound}
We next examine some properties of degree bounded MFQSTs, where the given lower bound on degree is $\phi\geq 3$. Let $x\in Z\cup S$ be a node in
a locally minimal FQST $T$, with in-neighbours $z_1,...,z_p$ and local sink $z_{p+1}$. Let $I=\{1,...,p+1\}$ and let $J\subseteq
I\backslash\{p+1\}$. A \textit{$J$-split} (or just \textit{split} if the context is clear) of $x$ introduces a Steiner point $s'$ such that the
neighbours of $s'$ are $x$ and every $z_i$, $i\in J$, and the neighbours of $x$ are $s'$ and $z_i$ for every $i\in I\backslash J$. We assume
that, after splitting, $s'$ and all other Steiner points are relocated to their optimal positions relative to the new topology. Let the
resultant tree be denoted by $T_0$. A $J$-split is \textit{beneficial} if $L(T_0)<L(T)$. Most splits are beneficial, but not all; see Fig.
\ref{figNoSplit}.

In this section we use the existence of beneficial $J$-splits to show that imposing a lower bound $\phi \geq 3$ on the degree of Steiner points
implies there is also an upper bound of $2\phi-3$ on the degree. We need the following definition and lemma before we can show that beneficial
splits of any size can always be found.

We define two edges in a network to be \emph{overlapping} if they are both incident with a common node and every point of one edge lies in the
other.

\begin{figure}[htb]
\begin{center}
    \includegraphics[scale=0.4]{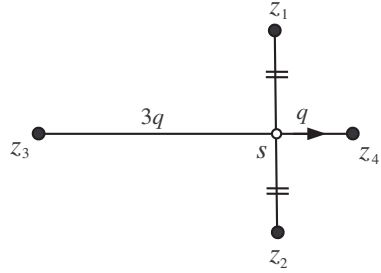}\\

  \caption{A $\{1,2\}$-split of $s$ is not beneficial since $\mathcal{C}(1z_1,1z_2)=\mathcal{C}(1z_3,3z_4)=s$}
  \label{figNoSplit}
\end{center}
\end{figure}

\begin{lemma}\label{lemsplit} Let $T$ be a  non-degenerate FQST  with a pair of  overlapping edges, where either $T$ is not degree-bounded,
or $T$ is degree bounded and the common node of the two overlapping edges is not a Steiner point of degree $\phi$. Then $T$ is not an MFQST.
\end{lemma}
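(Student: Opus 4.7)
The plan is to exhibit an FQST $T'$ with $L(T')<L(T)$, produced by a single local rerouting around the common node $x$ of the overlapping edges $xu$ and $xv$. After relabelling I may take $|xu|\leq|xv|$, so that $u$ lies strictly between $x$ and $v$ on segment $xv$, with $|uv|=|xv|-|xu|>0$. The rerouting will replace one of $xu,xv$ by the chord $uv$ and shift the associated subtree; the key algebraic identity driving each cost computation is $|xv|^2=(|xu|+|uv|)^2=|xu|^2+2|xu||uv|+|uv|^2$.

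Since $x$ has at most one out-edge, two cases suffice. In Case~A both $xu$ and $xv$ are in-edges of $x$: form $T'$ by deleting $vx$ and inserting $vu$ as an in-edge of $u$, so the flow $f(vx)$ reroutes through $u$ and the flow on $ux$ becomes $f(ux)+f(vx)$. In Case~B exactly one of the two edges is $x$'s out-edge; up to relabelling, assume $xv$ is the out-edge, delete $xu$, and insert $uv$ as an in-edge of $v$, so that the flow $f(xu)$ now travels along $uv$ and the flow on $xv$ drops to $f(xv)-f(xu)$. In each case $T'$ is a spanning tree with a consistent additive-flow labelling, and only the two modified edges change cost, since the total in-flow at $x$ and the total in-flow at $v$ (or $u$) are each preserved.

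A short expansion using the displayed identity gives $L(T)-L(T')=2f(vx)\,|xu|\,|uv|$ in Case~A and $L(T)-L(T')=2f(xu)\,|xu|\,|xv|$ in Case~B, both strictly positive by non-degeneracy. To confirm $T'$ is a valid FQST, note the rerouting decreases $\deg x$ by one and increases the degree of one other endpoint by one. Since the degree constraint (when present) is a lower bound, the increase is harmless, and the decrease is admissible precisely when $x$ is not a Steiner point of degree $\phi$; this is the lemma's hypothesis.

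The delicate point I expect is that $T'$ might have a zero-flow edge, which can only happen in Case~B when $f(xv)=f(xu)$, forcing $x$ to be a Steiner point with $u$ as its unique in-neighbour, so $\deg x=2$. The degree-bounded setting rules this out immediately since $\phi\geq 3$. In the unbounded setting, local minimality together with Corollary~\ref{corMid} places such an $x$ at the midpoint of $u$ and $v$; combined with $u$ lying on segment $xv$ this forces $u=v$, contradicting the existence of two distinct overlapping edges. Handling this degenerate situation carefully is the only non-routine part of the argument.
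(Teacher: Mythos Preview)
Your proof is correct and follows the same idea as the paper's: replace one of the two overlapping edges by the chord $uv$ and observe that cost strictly drops while only the degree of the common node decreases. The paper's version is much terser---it simply says ``replace the longer edge $e_1$ by $u_1u_2$\ldots regardless of the directions of flow'' and does not compute the savings or address the zero-flow issue at all---so your explicit case split and cost formulas are a genuine improvement in rigor, not a different approach.

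Two minor points to clean up. First, your invocation of Corollary~\ref{corMid} in the last paragraph requires $T$ to be locally minimal, which the lemma does not assume; this is fine once you explicitly switch to the contradiction framing (``suppose $T$ were an MFQST, hence locally minimal\ldots''), or you can avoid it entirely by noting that when the edge $xv$ acquires zero flow you may simply delete it together with the now-isolated degree-two Steiner point $x$, obtaining a strictly cheaper tree with one fewer Steiner point (and, in the explicitly bounded case, re-insert a Steiner point on any edge). Second, the conclusion ``this forces $u=v$'' is not quite right: from $x=(u+v)/2$ together with $u$ lying strictly between $x$ and $v$ one deduces $x=v$, contradicting non-degeneracy rather than distinctness of the edges. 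Also, your ``up to relabelling'' in Case~B may undo the earlier convention $|xu|\le |xv|$, but since your savings formula $2f(xu)\,|xu|\,|xv|$ is symmetric in $|xu|$ and $|xv|$ this causes no harm.
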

\begin{proof}
Suppose that $T$ is as specified but is also an MFQST. Suppose that the two overlapping edges are $e_1=u_1v$ and $e_2=u_2v$ with
$|u_1v|>|u_2v|$. Then, regardless of the directions of flow, we can replace edge $e_1$ by $u_1u_2$ and thereby reduce the total cost of $T$.
This is a contradiction. Note that the edge replacement is allowed in the degree-bounded case because the only  node-degree that decreases is
that of $v$, which, by assumption, is not a Steiner point of degree $\phi$.
\end{proof}

In the next proposition we let $d_s$ denote the in-degree of $s$.

\begin{proposition}\label{propsplit} Let $T$ be a non-degenerate locally minimal FQST  containing a Steiner point $s$ such that $d_s>\phi-1$ if $T$ is
degree-bounded (with degree bound $\phi$), and  $d_s>1$ otherwise. Furthermore, assume that  no pair of overlapping edges in $T$ have  $s$ as
their common node. Then there exists a beneficial $J$-split of $s$ for any $1\leq |J|\leq d_s$.
\end{proposition}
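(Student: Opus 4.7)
The plan is to reduce beneficiality of a $J$-split of $s$ to the algebraic condition $C_J\neq s$, where $C_J:=\mathcal{C}(\{f_iz_i\}_{i\in J})$ and $F_J:=\sum_{i\in J}f_i$, and then show that a suitable $J$ can be found for every prescribed size $j\in[1,d_s]$. Write $p:=d_s$, let $z_1,\ldots,z_p$ be the in-neighbours of $s$, and let $z_{p+1}$ be the local sink.

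I first consider a $J$-split in which the new Steiner point $s'$ is placed initially at $s$, so every existing node stays put and the new edge $ss'$ has zero length; the cost of this configuration equals $L(T)$. I then perturb $s'$ to $s+tu$ for a unit vector $u$ and small $t>0$. Three edge-cost changes arise: removal of the edges $sz_i$ for $i\in J$, addition of the edges $s'z_i$ (each carrying flow $f_i$), and addition of the new edge $s's$ (carrying flow $F_J$). A direct expansion using $\sum_{i\in J}f_i(s-z_i)=F_J(s-C_J)$ gives
$$\Delta L(t)=2tF_J\langle u,\,s-C_J\rangle+2F_J t^2.$$
If $C_J\neq s$, choosing $u=(C_J-s)/\|C_J-s\|$ makes the linear term strictly negative, so $\Delta L(t)<0$ for every sufficiently small $t>0$. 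The locally minimal tree $T_0$ for the new topology (unique and well-defined by the strict convexity of $L$ noted at the start of this section) can only have smaller cost still, so $L(T_0)<L(T)$ and the $J$-split is beneficial.

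It then remains to find, for every $j\in[1,p]$, some $J\subseteq\{1,\ldots,p\}$ of size $j$ with $C_J\neq s$. For $j=1$ the singleton $J=\{i\}$ gives $C_J=z_i$, which differs from $s$ by the non-degeneracy of $T$. For $j=p$ the only choice yields $C_J=\bigl(\sum_i f_iz_i\bigr)/F$; combining this with Proposition \ref{propCen} gives $C_J=2s-z_{p+1}$, so $C_J=s$ would force $s=z_{p+1}$, again contradicting non-degeneracy. For intermediate $1<j<p$, suppose for contradiction that $C_J=s$ for \emph{every} $J$ of size $j$, i.e.\ $\sum_{i\in J}v_i=0$ with $v_i:=f_i(z_i-s)$. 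Comparing two $j$-subsets differing only by a single swap $b\leftrightarrow a$ forces $v_a=v_b$; applying this over all pairs shows that all $v_i$ equal a common vector $v$, whence $jv=0$ and $v=0$. Since $f_i>0$, this gives $z_i=s$ for every in-neighbour, contradicting non-degeneracy.

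The main delicate point is the swap argument in the intermediate case, which converts the global hypothesis that $\sum_Jv_i$ vanishes on every $j$-subset into the strong conclusion that all of the $v_i$ coincide, where it then collides with non-degeneracy. I expect the hypothesis that no two edges overlap at $s$ to play only an auxiliary role in the argument sketched above, guaranteeing that the perturbed point $s+tu$ does not prematurely collide with an in-neighbour of $s$ when $u$ is taken in the direction $C_J-s$, so that the first-order expansion of $\Delta L(t)$ remains valid along the chosen ray; the main combinatorial mechanism (centroid condition plus swap) does not depend on it.
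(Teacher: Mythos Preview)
Your argument is correct, but it differs from the paper's in the intermediate case $1<j<p$. The paper proceeds by induction on $|J|$: given any $J$ with $|J|=d\le d_s-2$, it picks two distinct indices $\alpha,\alpha'\notin J$ and observes that if both $C_{J\cup\{\alpha\}}=s$ and $C_{J\cup\{\alpha'\}}=s$, then subtracting the two centroid equations gives $f_\alpha(z_\alpha-s)=f_{\alpha'}(z_{\alpha'}-s)$, so $z_\alpha$ and $z_{\alpha'}$ lie on the same ray from $s$ and the edges $sz_\alpha$, $sz_{\alpha'}$ overlap. Thus the non-overlapping hypothesis is used essentially in the paper's proof, to rule out this pairwise obstruction.

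Your swap argument instead assumes \emph{every} $j$-subset has centroid $s$ and deduces that all the vectors $v_i=f_i(z_i-s)$ coincide, hence vanish, contradicting mere non-degeneracy. This is a cleaner combinatorial mechanism and in fact establishes the proposition without the non-overlapping hypothesis at all, a mild strengthening. Your closing paragraph, however, misidentifies where that hypothesis might enter: your displacement formula $\Delta L(t)=2tF_J\langle u,s-C_J\rangle+2F_Jt^2$ is exact, not a first-order expansion, so no ``collision'' issue arises along the ray and the non-overlapping assumption plays no role anywhere in your argument. It is needed only in the paper's version because the paper compares just two specific $(d{+}1)$-sets rather than all of them.
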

\begin{proof}Suppose that the in-neighbours of $s$ are $z_1,...,z_{d_s}$ and let $J\subseteq \{1,...,d_s\}$. We argue by induction on the cardinality
of $J$. Clearly the result holds for $|J|=1$ since $\mathcal{C}(f_iz_i)=z_i\neq s$ for any $i\leq d_s$ by non-degeneracy; also, the result holds
for $|J|=d_s$ since $\mathcal{C}(f_1z_1,...,f_{d_s}z_{d_s})\neq s$ by Corollary~\ref{corMid}. Next suppose that $|J|=d$ for some $1\leq d\leq
d_s-2$ such that the $J$-split of $s$ is beneficial, and let $\alpha,\alpha^\prime\in  \{1,...,d_s\}\backslash J$ be distinct. Let
$J_0=J\cup\{\alpha\}$ and let $J_1=J\cup\{\alpha'\}$. Note that if $C_{J_0}=C_{J_1}=s$ then the three points $z_\alpha,z_{\alpha'}$ and $s$ are
collinear, with $z_\alpha$ or $z_{\alpha'}$ lying between the other two (again by Corollary~\ref{corMid}).  This contradicts the  assumption
that $s$ has no incident pair of overlapping edges. Therefore, either the $J_0$-split or $J_1$-split of $s$ is beneficial and is of cardinality
$d+1$.
\end{proof}

We now have the following four results for the degree bounded problem. The two corollaries also hold for Euclidean quadratic Steiner trees; see
\cite{soukop}.

\begin{proposition}An MFQST $T$ with degree bound $\phi$ has $\phi\leq \deg s \leq 2\phi-3$ for every Steiner point $s$.
\end{proposition}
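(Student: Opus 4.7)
The lower bound $\phi\leq\deg s$ is just a restatement of the defining condition of the degree bound, so the substance is the upper bound. I argue this by contradiction.

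Assume $T$ is an MFQST containing a Steiner point $s$ with $\deg s\geq 2\phi-2$, and write $d_s=\deg s-1\geq 2\phi-3$ for the in-degree of $s$. Since $\phi\geq 3$ gives $\deg s>\phi$, Lemma \ref{lemsplit} rules out the possibility that $s$ is the common endpoint of two overlapping edges (otherwise $T$ could be strictly improved, contradicting minimality). Hence the hypotheses of Proposition \ref{propsplit} are satisfied at $s$.

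Apply Proposition \ref{propsplit} to obtain a beneficial $J$-split of $s$ with $|J|=\phi-1$. This choice is permitted because $1\leq \phi-1\leq d_s$ (using $\phi\geq 3$ and $d_s\geq 2\phi-3\geq \phi-1$). Let $T_0$ be the resulting tree. The new Steiner point $s'$ has as neighbours the $\phi-1$ split-off in-neighbours of $s$ together with $s$ itself, so $\deg s'=\phi$. The degree of $s$ in $T_0$ equals $(d_s-(\phi-1))+1+1=d_s-\phi+3\geq \phi$, where the two $+1$ contributions come from the out-neighbour of $s$ and the newly introduced in-neighbour $s'$. Hence every Steiner point of $T_0$ satisfies the degree bound while $L(T_0)<L(T)$, contradicting the minimality of $T$.

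The main obstacle is the arithmetic bookkeeping: choosing $|J|$ so that both $s$ and the new Steiner point $s'$ respect the bound $\phi$. The threshold $2\phi-3$ arises precisely from the two requirements $|J|\geq \phi-1$ (so $\deg s'\geq \phi$) and $d_s-|J|+2\geq \phi$ (so $\deg s\geq \phi$ in $T_0$); these can be simultaneously satisfied iff $d_s\geq 2\phi-3$, which is the assumption to be contradicted. A minor issue is that Proposition \ref{propsplit} assumes non-degeneracy; I would handle this by noting that an MFQST may be assumed non-degenerate, since zero-length edges can be contracted without increasing cost and without decreasing the degree of any remaining Steiner point, so the degree bound is preserved after contraction.
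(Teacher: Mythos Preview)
Your proof is correct and follows essentially the same approach as the paper: assume a Steiner point of degree exceeding $2\phi-3$, invoke Lemma~\ref{lemsplit} to rule out overlapping edges at $s$, then apply Proposition~\ref{propsplit} with $|J|=\phi-1$ so that both resulting Steiner points meet the degree bound, contradicting minimality. Your write-up is more detailed than the paper's (you spell out the arithmetic showing both $s$ and $s'$ have degree at least $\phi$, and you flag and address the non-degeneracy hypothesis needed for Proposition~\ref{propsplit}, which the paper's own proof simply elides).
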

\begin{proof}Suppose, contrary to the proposition, that $T$ contains a Steiner point $s$ with $\deg s > 2\phi-3$. Note that $s$ has
no incident pair of overlapping edges, by Lemma~\ref{lemsplit}. Any $J$-split of $s$ with $|J|=\phi-1$ will produce two Steiner points each of
degree at least $\phi$. By  Proposition~\ref{propsplit} at least one choice of such $J$ must be beneficial, which contradicts the minimality of
$T$.
\end{proof}

\begin{corollary}When $\phi=3$ every Steiner point will be of degree exactly $3$.
\end{corollary}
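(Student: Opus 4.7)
The plan is essentially a direct substitution into the bound established by the preceding proposition. That proposition asserts that in any MFQST with degree bound $\phi$, every Steiner point $s$ satisfies $\phi \leq \deg s \leq 2\phi - 3$. Setting $\phi = 3$ collapses this two-sided inequality to $3 \leq \deg s \leq 3$, forcing equality throughout.

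So the proof I would write is a single-line deduction: invoke the previous proposition with $\phi=3$, observe that $2\phi - 3 = 3 = \phi$, and conclude $\deg s = 3$ for every Steiner point. No induction, no splitting arguments, and no further geometry are needed, since all the heavy lifting (the upper bound via beneficial $J$-splits together with Lemma on overlapping edges) has already been carried out in the parent proposition.

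There is no real obstacle here; the only thing worth double-checking is that the hypothesis $\phi \geq 3$ used throughout the subsection is compatible with the specialization $\phi = 3$, which it is by definition of the degree-bounding strategy in Section~\ref{secBound}. Hence the corollary is immediate and its proof can be stated in one or two lines.
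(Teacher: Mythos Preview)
Your proposal is correct and matches the paper's approach exactly: the paper states the corollary without proof, treating it as an immediate consequence of the preceding proposition via the substitution $\phi=3$ into $\phi \leq \deg s \leq 2\phi-3$.
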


\begin{proposition}Every source $z$ in an MFQST $T$ with degree bound $\phi$ is of degree at most $\phi-1$. Moreover, if  $z$ has degree equal to
$\phi-1$ and if $C$ denotes the centre of mass of $z$ and its in-neighbours, then $z$ lies at the midpoint of $C$ and the out-neighbour of $z$.
\end{proposition}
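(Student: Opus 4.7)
The plan is to derive contradictions from the minimality of $T$ by exhibiting a beneficial local modification of $T$ at $z$ whenever $\deg z \geq \phi$, and to use one of these modifications to force the midpoint identity when $\deg z = \phi-1$. Writing $z_1,\ldots,z_p$ for the in-neighbours of $z$ (with flows $f_i$), $z_{p+1}$ for the out-neighbour, $F=\sum_i f_i$, and noting that $f_{p+1}=1+F$, I shall use two moves. Move~(a) is the full $J$-split with $J=\{1,\ldots,p\}$: introduce a Steiner point $s'$ of degree $p+1$ with in-neighbours $z_1,\ldots,z_p$ and out-neighbour $z$. Move~(b), which I call an \emph{insertion}, replaces all edges incident with $z$ by a new Steiner point $s'$ of degree $p+2$ joined to $z_1,\ldots,z_p,z_{p+1}$ and to $z$, so that $z$ becomes a degree-$1$ pendant with $zs'$ as its out-edge. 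Both moves preserve flow conservation.

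A routine centre-of-mass calculation in the spirit of Proposition~\ref{propCen} and the proof of Proposition~\ref{propsplit} yields explicit optimal placements of $s'$ and the resulting cost reductions. After optimally placing $s'$, move~(a) reduces $L(T)$ by $\frac{1}{2}F|z-c|^2$, where $c=\mathcal{C}(f_1z_1,\ldots,f_pz_p)$; and move~(b) reduces $L(T)$ by $\frac{\alpha^2}{\alpha+1}|z-z^*|^2$, where $\alpha=F+f_{p+1}=1+2F$ and $z^*=\mathcal{C}(f_1z_1,\ldots,f_pz_p,f_{p+1}z_{p+1})$ is the weighted centroid of all the original neighbours of $z$. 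Hence (a) is strictly beneficial iff $z\neq c$, and (b) is strictly beneficial iff $z\neq z^*$.

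Now suppose for contradiction that $\deg z\geq \phi$, so $p\geq \phi-1$. Move~(a) then produces a Steiner point of degree $p+1\geq \phi$ and move~(b) one of degree $p+2\geq \phi+1$, so both respect the degree bound. If either $z\neq c$ or $z\neq z^*$ then $T$ is strictly improved, contradicting minimality. In the remaining case $z=c=z^*$, substituting $\sum f_iz_i=Fz$ (from $z=c$) into the defining equation for $z^*$ collapses it to $f_{p+1}z=f_{p+1}z_{p+1}$, i.e.\ $z=z_{p+1}$, contradicting non-degeneracy of $T$. This proves $\deg z\leq \phi-1$.

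For the second assertion, when $\deg z=\phi-1$ move~(a) is forbidden (it would give $\deg s'=\phi-1<\phi$) but move~(b) is still admissible ($\deg s'=\phi$), so minimality forces $z=z^*$. A short algebraic manipulation rewrites this as $2z=C+z_{p+1}$ with $C=(z+\sum f_iz_i)/(1+F)=\mathcal{C}(1\cdot z,f_1z_1,\ldots,f_pz_p)$, which is precisely the claimed midpoint condition. The main technical obstacle is the cost-reduction formula for move~(b), which requires a parallel-axis expansion of $L_z(x):=\sum f_i|x-z_i|^2+f_{p+1}|x-z_{p+1}|^2$ around $z^*$ combined with careful bookkeeping of the extra unit-flow edge $zs'$; once that formula is in hand, the rest of the argument is a short case analysis.
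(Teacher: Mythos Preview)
Your argument is correct and, for the second assertion, actually more explicit than the paper's. The paper's one-line proof simply says ``the reasoning is similar to the previous proposition, except that when using $J$-splits there is no lower bound on the in-degree of $z$'': in other words, when $\deg z\geq\phi$ one invokes the analogue of Proposition~\ref{propsplit} (noting $z$ has no overlapping incident edges by Lemma~\ref{lemsplit}) to find a beneficial $J$-split with $|J|=\phi-1$, producing a Steiner point of degree $\phi$, while the source may drop to any in-degree. You instead work with two fixed moves and explicit cost-reduction formulas; this is more concrete and self-contained, and it pays off in the second part. There, no $J$-split in the paper's sense (with $J\subseteq\{1,\ldots,p\}$) can create a Steiner point of degree $\geq\phi$ when $p=\phi-2$, so the paper's brief proof really does need something like your insertion move~(b) to force $z=z^*$. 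Your move~(b) makes that step clean, and your verification that $z=z^*$ is equivalent to the stated midpoint condition is exactly the right bridge to Corollary~\ref{corMid}.

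One small point worth tightening: you conclude the case $z=c=z^*$ by saying this ``contradicts non-degeneracy of $T$'', but the proposition does not assume $T$ is non-degenerate. If $z_{p+1}$ is a terminal this is immediate (sources and sink are distinct points by hypothesis); if $z_{p+1}$ is a Steiner point coinciding with $z$, one can contract the zero-length edge $zz_{p+1}$ (the resulting tree has the same cost, and $z$ acquires degree at least $\phi$ with a new out-neighbour), then re-apply the argument. This terminates after finitely many contractions. The paper's own machinery (Lemma~\ref{lemsplit} and Proposition~\ref{propsplit}) carries the same non-degeneracy hypothesis, so this is a shared technicality, not a flaw specific to your approach.
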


\begin{proof}The reasoning is similar to the previous proposition, except that when using $J$-splits there is no lower bound on the in-degree of $z$.
\end{proof}

\begin{corollary}Every source in an MFQST with $\phi=3$ is of degree at most two. Moreover, if a source has degree equal to $2$ then it is
collinear with its neighbours (see for instance Fig. \ref{fig6}).
\end{corollary}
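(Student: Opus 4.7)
The plan is to deduce both parts of the corollary directly from the preceding proposition, specialised to $\phi=3$.

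For the first claim, the previous proposition asserts that every source in an MFQST with degree bound $\phi$ has degree at most $\phi-1$. Setting $\phi=3$ immediately gives that every source has degree at most two.

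For the second claim, suppose a source $z$ has degree exactly $2=\phi-1$. Then $z$ has a single in-neighbour, call it $z_1$, and an out-neighbour $z_2$. Let $C$ be the centre of mass of $z$ and $z_1$ (using the masses prescribed in Proposition~\ref{propCen}). Because $C$ is a weighted average of the two points $z$ and $z_1$, the point $C$ lies on the line segment $zz_1$, and in particular on the line through $z$ and $z_1$. By the ``moreover'' part of the previous proposition, $z$ is the midpoint of $C$ and $z_2$, so $z_2$ lies on the line through $z$ and $C$. Since this is the same line as the one through $z$ and $z_1$, we conclude that $z_1$, $z$, and $z_2$ are collinear, as required.

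There is no real obstacle here: the corollary is essentially a bookkeeping exercise that combines the degree bound with the midpoint characterisation from the previous proposition, using only the elementary fact that the centre of mass of two points lies on the segment joining them.
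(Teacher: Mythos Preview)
Your proof is correct and matches the paper's intended reasoning: the paper gives no explicit proof for this corollary, treating it as an immediate specialisation of the preceding proposition to $\phi=3$, exactly as you have done.
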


\begin{figure}[htb]
\begin{center}
    \includegraphics[scale=0.4]{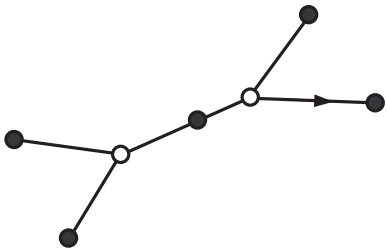}\\

  \caption{An MFQST  with $\phi=3$ with a source of degree $2$.}
  \label{fig6}
\end{center}
\end{figure}

\subsection{Properties of MFQSTs that are not degree-bounded}
In this subsection we consider some properties of the node-weighted and the explicitly bounded versions of the FQSTP.

\begin{figure}[htb]
\begin{center}
    \includegraphics[scale=0.4]{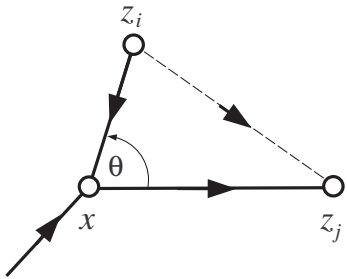}\\
  \caption{The angle between  an in-edge and an out-edge must be obtuse}
  \label{fig1}
\end{center}
\end{figure}

\begin{proposition}\label{Propangle} The angle between any in-edge and the out-edge of a given node in a node-weighted or explicitly bounded MFQST is
at least $90^\circ$ (see Fig. \ref{fig1}).
\end{proposition}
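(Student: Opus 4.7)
The plan is to argue by contradiction. Suppose that in an MFQST $T$ there is a node $v$ with an in-edge from $u$ carrying flow $f$ and an out-edge to $w$, such that $\angle uvw < 90^\circ$. I will split the analysis into two cases according to whether removing the edge $uv$ would leave $v$ with no in-edge at all.

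In the exceptional case where $v$ is a Steiner point whose only in-edge is $uv$, the node $v$ has degree two and Corollary~\ref{corMid} forces $v$ to be the midpoint of $u$ and $w$. Then $u,v,w$ are collinear and the angle at $v$ is $180^\circ$, contradicting the assumption.

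Otherwise $v$ is either a source or a Steiner point of in-degree at least two. I perform the topological swap that deletes the edge $uv$ and inserts a new edge $uw$, leaving every other edge and every Steiner-point location untouched; the flow $f$ that previously traversed $v$ now bypasses it, so the flow on $vw$ drops by $f$ while the new edge $uw$ carries flow $f$. The resulting network $T'$ has the same Steiner-point set as $T$, so both the explicit bound $|S|=k$ and the node-weight term $c|S|$ are preserved, and the resulting network is easily checked to remain a tree satisfying the FQST flow-conservation axioms. Using the law of cosines, the cost change telescopes to
\[
L(T')-L(T) \;=\; f\bigl(|uw|^2-|uv|^2-|vw|^2\bigr) \;=\; -2f\,|uv|\,|vw|\cos\angle uvw,
\]
which is strictly negative when $\angle uvw<90^\circ$, contradicting the minimality of $T$.

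The main obstacle is the bookkeeping needed to justify the swap cleanly. One must check that $T'$ is indeed a tree (no cycle appears because, after deleting $uv$, $w$ remains in the component containing $v$, which is disjoint from the component rooted at $u$), that flow conservation continues to hold at $u$, $v$ and $w$ (which follows immediately from the additive-flow axiom, since the net in-flow at $w$ is unchanged), and that the swap really does not drop any surviving edge flow to zero. This last point is exactly what compels me to isolate the Steiner-point-of-in-degree-one situation and dispatch it via Corollary~\ref{corMid} rather than via the swap; once these administrative checks are in place, the proof is completed by the single cosine-rule computation above.
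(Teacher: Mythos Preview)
Your argument is correct and follows essentially the same edge-swap idea as the paper: replace $uv$ by $uw$ and use the law of cosines (the paper phrases it as ``by Pythagoras'') to see that the cost strictly drops when the angle is acute. Your treatment is in fact more careful than the paper's terse proof, since you explicitly isolate the degree-two Steiner point case (where the swap would leave a dangling Steiner vertex) and dispose of it via Corollary~\ref{corMid}, and you make the flow bookkeeping on $vw$ explicit; the paper glosses over both of these points.
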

\begin{proof}If we assume that $\theta < 90^\circ$ as in Fig. \ref{fig1} then path $z_i,z_j$ is shorter (quadratically) than path $z_i,\mathbf{x},z_j$
(by Pythagoras' theorem). Therefore, since there are no restrictions on the degree of nodes, we can replace edge $z_i\mathbf{x}$ with edge
$z_iz_j$, resulting in a tree with lower cost.
\end{proof}

In fact, by repeatedly swapping edges we can ensure that every in-edge-out-edge angle is strictly greater than $90^\circ$, although this may
cause the sink to obtain a large degree

\begin{figure}[htb]
\begin{center}
    \includegraphics[scale=0.4]{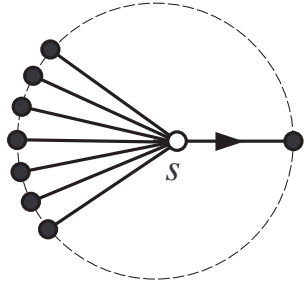}\\
  \caption{Nodes in an MFQST can acquire large degree}
  \label{fig2}
\end{center}
\end{figure}

\begin{proposition}Steiner points of a node-weighted or explicitly bounded MFQST can achieve a degree of $|Z|+1$.
\end{proposition}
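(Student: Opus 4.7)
The plan is to exhibit, for each of the two bounding strategies, an explicit instance of the FQSTP whose MFQST contains a Steiner point of degree $|Z|+1$. In both constructions I use the same geometric configuration: place the $n=|Z|$ sources at the vertices of a regular $n$-gon of radius $r$ centred at the origin, and put $z_{\mathrm{BS}}=(0,D)$ with $D$ much larger than $r$. Intuitively, when the sources are tightly clustered and the sink is far away, a single Steiner point acting as a hub for the entire cluster should be the most cost-effective relay, and the resulting star tree can be made to beat every competitor by choosing $r/D$ sufficiently small.

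For the explicitly bounded case, set $k=1$ and let $\mathcal{T}^\star$ be the star topology in which the Steiner point $s$ is adjacent to every source and to $z_{\mathrm{BS}}$. By Proposition~\ref{propCen}, the optimal placement of $s$ within $\mathcal{T}^\star$ is $(\bar z+z_{\mathrm{BS}})/2=(0,D/2)$ (using $\bar z=0$ by symmetry), yielding $L(\mathcal{T}^\star)=nr^2+nD^2/2$. For any alternative topology with a single Steiner point, at least one source is a non-leaf, so an edge of flow $\geq 2$ must span much of the distance to the sink; the representative calculation, in which one source acts as a local hub for the others and the Steiner point sits on the hub-to-sink edge, gives cost $nD^2/2 + \tfrac{5}{2}nr^2$. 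Comparable lower bounds for the remaining topologies follow by the same reasoning, so for $r/D$ sufficiently small $\mathcal{T}^\star$ is the unique MFQST and its Steiner point has degree $n+1=|Z|+1$.

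For the node-weighted case, keep the same geometric configuration and choose the Steiner weight $c$ in an intermediate range. The zero-Steiner tree costs $\approx nD^2$; the one-Steiner star costs $nr^2+nD^2/2+c$; and the strongest two-Steiner competitor (a hub at the cluster plus a degree-two Steiner subdividing the hub-sink backbone) costs $\approx nr^2+nD^2/3+2c$, via the path-subdivision formula $|uv|^2f(uv)/(p+1)$ recalled in Section~\ref{secBound}. Any $c\in(nD^2/6,nD^2/2)$ (up to $O(r^2)$ corrections) then makes the one-Steiner star strictly better than both the direct tree and every two-Steiner tree; hence the MFQST is the star and its Steiner point again has degree $|Z|+1$.

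The main obstacle is verifying that the star topology beats every other tree topology with the allowed number of Steiner points --- a finite but fiddly case analysis. The unifying point is that the dominant cost is always the backbone of flow $n$ traversing distance $\approx D$, and by the strict convexity of $L$, concentrating this backbone onto a single Steiner--sink edge (rather than routing it through non-leaf sources or pushing it off-axis) is favoured, with any deviation costing an extra $\Omega(r^2)$ that is overwhelmed only when $D$ is taken large enough relative to $r$.
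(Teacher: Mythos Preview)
Your approach is viable in outline but leaves genuine gaps, and it differs substantially from the paper's argument. The paper places all terminals (sources \emph{and} sink) on a common circle and then invokes the angle condition of Proposition~\ref{Propangle}: in any MFQST with $k=1$, if some source had degree greater than $1$ it would have an in-edge and an out-edge meeting at an angle below $90^\circ$, contradicting that proposition. This single geometric observation disposes of \emph{all} competing topologies at once, with no cost estimates needed. For the node-weighted case the paper clusters the sources near the antipode of the sink on a unit-diameter circle and picks $c=n/2-\varepsilon$, then reuses the same angle argument.

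By contrast, your proof attempts a direct cost comparison and asserts that ``comparable lower bounds for the remaining topologies follow by the same reasoning''. This is where the argument is incomplete. With $k=1$ there are many topologies besides the one you compute (for instance, the sink can have degree $\geq 2$, or several sources can be internal, or the Steiner point can sit elsewhere in the tree), and you have not bounded their costs. In the node-weighted case you only compare the star against the $k=0$ tree and one particular $k=2$ tree; you have neither ruled out other $k=2$ topologies nor any $k\geq 3$ tree, and the ``strongest competitor'' claim is unsupported. (Your representative calculation also slips: with one source acting as hub the cluster contribution is $2nr^2$, not $\tfrac{5}{2}nr^2$, though the inequality survives.) These holes can in principle be filled by a uniform lower bound of the form ``any tree in which a source is internal pays at least $nD^2/2+(1+\delta)nr^2$'', but that requires a careful argument you have not given. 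The paper's use of Proposition~\ref{Propangle} sidesteps this entire case analysis and is the cleaner route.
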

\begin{proof}For the explicitly bounded case, suppose that sources are located on a circle as in Fig. \ref{fig2}, with a single Steiner point. For
this FQST to be locally minimal, the Steiner point must be located near the centre of the circle, but slightly towards  the sink. Clearly, by
Proposition~\ref{Propangle}, this tree is also globally minimal -- in particular, if any of the sources has degree greater than 1 in the MFQST
we get a contradiction to the angle condition. Therefore the Steiner point for the  MFQST with explicit bound $k=1$ in this instance has degree
$|Z|+1$.

For the node-weighted case, let $|Z|=n$, and suppose the sources and sink are located on a unit diameter circle such that all sources lie within
an arbitrarily small neighbourhood of the antipodal point to the sink. Then for any number of Steiner points the minimum cost of the network
will be close to that for a network with a single source of weight $n$. Let the cost of each Steiner point be $n/2 - \varepsilon$ for a small
$\varepsilon >0$. Then it is clear that the MFQST has exactly one Steiner point, located close to the centre of the unit diameter circle. By the
same argument as in the previous case, this Steiner point has degree $n+1$.
\end{proof}

Next we present a few properties relating to the node-weighted version of the FQSTP. Recall that this version utilises a modified cost function,
namely $L_c(T)=\displaystyle\sum_{e\in E(T)} f(e)\vert e \vert^2+c|S|$ where $c>0$ is the cost of a Steiner point. The first two results are
necessary conditions for an FQST to be an MFQST, and are based on the number of degree two Steiner points that lie on any given straight line
path of an optimal tree.

\begin{proposition}Let $p$ be the number of degree-two Steiner points located on a path $P$ with endpoints $u,v$ of a node-weighted MQFST. Then
$p(p+1)\leq\displaystyle\frac{f(uv)|uv|^2}{c}\leq (p+2)(p+1)$, where $c$ is the cost of a Steiner point.
\end{proposition}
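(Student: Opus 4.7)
The plan is to reduce everything to a one-variable optimization over the integer number of degree-two Steiner points on the path $P$. Since the proposition assumes $p$ such Steiner points already lie on $P$ in the MFQST, I first observe that, by Corollary~\ref{corMid}, each of these degree-two Steiner points must lie at the midpoint of its two neighbours; iterating this tells me they are collinear with $u$ and $v$ and equally spaced along the segment $uv$. Hence the contribution of $P$ to the objective $L_c$ is exactly the quantity computed in the Fig.~\ref{figBead} calculation plus the $p$ Steiner-point weights, namely
\[
F(p) \;:=\; \frac{f(uv)\,|uv|^{2}}{p+1} \;+\; cp.
\]

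Next I use local optimality of the MFQST to compare $F(p)$ with $F(p-1)$ and $F(p+1)$. Inserting an additional degree-two Steiner point on $P$, or deleting one of the existing ones (and re-equispacing the remainder, which is always permitted because the flow along the whole path is just $f(uv)$ and no other part of the tree is affected), gives a valid competing node-weighted FQST. Optimality therefore requires
\[
F(p) \le F(p-1) \qquad \text{and} \qquad F(p) \le F(p+1),
\]
with the first inequality vacuous when $p=0$ (in which case the lower bound $p(p+1)=0$ is automatic).

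Writing out $F(p)\le F(p-1)$ and simplifying the telescoping term $\frac{1}{p}-\frac{1}{p+1}=\frac{1}{p(p+1)}$ yields $c\le \frac{f(uv)|uv|^{2}}{p(p+1)}$, i.e.\ the left inequality $p(p+1)\le f(uv)|uv|^{2}/c$. Similarly, $F(p)\le F(p+1)$ with $\frac{1}{p+1}-\frac{1}{p+2}=\frac{1}{(p+1)(p+2)}$ gives $c\ge \frac{f(uv)|uv|^{2}}{(p+1)(p+2)}$, i.e.\ the right inequality $f(uv)|uv|^{2}/c\le (p+1)(p+2)$.

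There is no real obstacle here; the only point that needs a sentence of justification is why the add/remove perturbation is admissible, which is clear because $P$ is a straight-line path of constant flow $f(uv)$ and the change to the tree is entirely localized to $P$, so neither the topology of the remainder of $T$ nor the positions or flow values of any other node or edge are disturbed.
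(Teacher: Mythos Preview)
Your proof is correct and follows essentially the same approach as the paper: define the path cost $F(p)=\dfrac{f(uv)|uv|^2}{p+1}+cp$ using the equal-spacing observation, then compare $F(p)$ with $F(p\pm 1)$ via optimality and simplify. You supply a little more justification than the paper does (the appeal to Corollary~\ref{corMid} for equal spacing, the explicit $p=0$ case, and the remark on why the perturbation is admissible), but the argument is the same.
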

\begin{proof}Note that the nodes on $P$ are collinear and equally spaced along the segment $uv$. The cost of path $P$ is
$\displaystyle\frac{f(uv)|uv|^2}{p+1}+cp$. We therefore need
$\displaystyle\frac{f(uv)|uv|^2}{p+1}+cp\leq\displaystyle\frac{f(uv)|uv|^2}{p+2}+c(p+1)$ and
$\displaystyle\frac{f(uv)|uv|^2}{p+1}+cp\leq\displaystyle\frac{f(uv)|uv|^2}{p}+c(p-1)$, from which the result follows from simple algebra.
\end{proof}

\begin{corollary}In a node-weighted MFQST every edge $e$ satisfies $|e|\leq \sqrt{\displaystyle\frac{2c}{f_e}}$, where $c$ is the cost of a Steiner
point.
\end{corollary}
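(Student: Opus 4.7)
The plan is to invoke the preceding proposition in the degenerate case $p=0$, i.e., to view the edge $e=uv$ itself as a path in the tree that happens to contain no interior degree-two Steiner points. The proposition gives, for any path $P$ with $p$ interior degree-two Steiner points, the upper bound $\frac{f(uv)|uv|^2}{c}\leq (p+2)(p+1)$; substituting $p=0$ yields $\frac{f(uv)|uv|^2}{c}\leq 2$, which rearranges directly to $|e|\leq \sqrt{2c/f_e}$.

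First I would note that every edge $e=uv$ of the MFQST is trivially a path between its endpoints, and by definition this path carries no interior degree-two Steiner points (it has no interior vertices at all, so in particular $p=0$). Thus the hypothesis of the proposition is satisfied. Second, I would observe that the relevant half of the proposition comes from the inequality stating that not inserting a new degree-two Steiner point on the straight segment $uv$ is at least as good as inserting one; equivalently,
\[
f(uv)|uv|^2\;\leq\;\frac{f(uv)|uv|^2}{2}+c,
\]
which is precisely the $p=0$ case of the right-hand bound in the proposition. Solving this inequality for $|uv|$ yields the claimed bound.

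There is no real obstacle here: the only subtlety is checking that the proposition's derivation is valid at the boundary value $p=0$, and this is immediate since the algebra used in its proof (comparing the cost of a $p$-subdivided path with that of a $(p+1)$-subdivided path) is unaffected when $p=0$. Hence the corollary follows as a one-line specialisation of the previous proposition.
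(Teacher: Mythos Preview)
Your proposal is correct and is precisely the intended derivation: the paper states the corollary without proof because it is the $p=0$ specialisation of the preceding proposition, exactly as you describe. Your check that the relevant inequality (comparing $p$ subdivisions to $p+1$) remains valid at $p=0$ is the only thing needed, and your argument handles it.
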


Since we are not directly bounding $k$ in the node-weighted version, it would be helpful to determine an upper bound $B$ for $k$ in terms of
$c$. This would immediately lead to an exact algorithm for calculating node-weighted MFQSTs: the first step would be to iterate through all
topologies interconnecting the given sources and at most $B$ Steiner points. A locally optimal solution for every topology could then be
calculated using either the algebraic or geometric methods described the next section, and the cheapest tree selected. The complexity of this
method, however, would be prohibitive for large problems (i.e., large $n$ or small $c$) and effective methods of pruning the number of viable
topologies would be needed in these cases. We first prove the following result in order to bound the ``length component" of the cost of a
node-weighted MFQST.

\begin{lemma}\label{lemB}Let $T$ be an explicitly bounded MFQST on $n$ sources, with bound $k$. Then
$L(T)\geq (n+k+1)^{-1}\displaystyle\sum_{i\leq n} |z_iz_{\mathrm{BS}}|^2$.
\end{lemma}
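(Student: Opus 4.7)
The plan is to rewrite $L(T)$ as a sum over source-to-sink paths (which is where the assumption $w(z_i)=1$ enters), then bound each such sum below using Cauchy--Schwarz and the triangle inequality.

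First I would set up the path decomposition. For each source $z_i$, since $T$ is a directed tree with every non-sink node having a unique out-edge, there is a unique directed path $P_i$ from $z_i$ to $z_{\mathrm{BS}}$. Because each source supplies one unit, the additive flow rule gives $f(e)=|\{i : e\in P_i\}|$ for every $e\in E(T)$. Interchanging the order of summation then yields
$$L(T)=\sum_{e\in E(T)}f(e)|e|^2=\sum_{i=1}^{n}\sum_{e\in P_i}|e|^2.$$

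Next I would apply two standard inequalities edge-by-edge along each $P_i$. By Cauchy--Schwarz, for any path $P_i$ with $\ell_i$ edges,
$$\sum_{e\in P_i}|e|^2 \;\geq\; \frac{1}{\ell_i}\Bigl(\sum_{e\in P_i}|e|\Bigr)^{2},$$
and by the triangle inequality $\sum_{e\in P_i}|e|\geq |z_i z_{\mathrm{BS}}|$. Since $T$ has $n+k+1$ nodes and therefore $n+k$ edges, the path $P_i$ has at most $n+k$ edges, so $\ell_i\leq n+k\leq n+k+1$. Combining these estimates,
$$\sum_{e\in P_i}|e|^2\;\geq\;\frac{|z_iz_{\mathrm{BS}}|^{2}}{n+k+1}.$$
Summing over $i=1,\ldots,n$ gives the desired bound.

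There is no real obstacle here; the only point that needs a moment's thought is the path-decomposition identity for $L(T)$, which relies crucially on the unit-supply assumption so that $f(e)$ literally counts the sources whose routes pass through $e$. The slight slack in using $n+k+1$ rather than $n+k$ is harmless and presumably chosen because $n+k+1$ is the total number of nodes, which is the quantity that appears naturally in the subsequent discussion.
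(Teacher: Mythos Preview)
Your proof is correct and follows essentially the same route as the paper: both decompose $L(T)$ into the sum over source--sink paths $\sum_i\sum_{e\in P_i}|e|^2$ and bound each path's cost below by $|z_iz_{\mathrm{BS}}|^2/(n+k+1)$. The only difference is cosmetic: the paper states the per-path bound geometrically (``the cheapest path with a given number of intermediate nodes is the collinear, equally-spaced one''), whereas you unpack that statement into Cauchy--Schwarz (equal spacing) and the triangle inequality (collinearity).
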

\begin{proof}Consider a single unit of flow from source $z_i$. A cheapest path from $z_i$ to $z_{\mathrm{BS}}$ must have a cost of at least that of
the path $P$, where $P$ contains all Steiner points and remaining sources arranged collinearly and equally spaced between $z_i$ and
$z_{\mathrm{BS}}$. In this case the cost of $P$ would be $\displaystyle\frac{|z_iz_{\mathrm{BS}}|^2}{n+k+1}$.
\end{proof}

Let $T$ be a minimum spanning tree on $Z\cup\{z_{\mathrm{BS}}\}$, and suppose that we add the minimum number of degree-two Steiner points to $T$
such that no edge is longer than $\sqrt{\frac{2c}{f}}$, where $f$ is the flow on the edge. Note that this can be done greedily, and therefore
the construction is of polynomial complexity. Let the resultant tree be denoted by $BST(Z)$, i.e., the \textit{beaded spanning tree} on $Z$.

\begin{lemma}Let $Z\cup\{z_{\mathrm{BS}}\}$ be any set of sources and a sink, and let $T_{\mathrm{opt}}$ be a node-weighted MFQST on these nodes. Then
$L_c(T_{\mathrm{opt}})\leq L_c(BST(Z))$.
\end{lemma}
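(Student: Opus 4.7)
The proof plan is essentially a one-line observation dressed up in verification. The statement is simply that $BST(Z)$ is a feasible solution to the node-weighted FQSTP, so the minimum $L_c(T_{\mathrm{opt}})$ cannot exceed $L_c(BST(Z))$.

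First I would verify that $BST(Z)$ is indeed a valid FQST on $Z\cup\{z_{\mathrm{BS}}\}$. The underlying object is a minimum spanning tree on $Z\cup\{z_{\mathrm{BS}}\}$ with some finite collection $S$ of degree-two Steiner points inserted along edges; it therefore spans $Z\cup S\cup\{z_{\mathrm{BS}}\}$ and is a tree. Orient every edge along its unique path towards $z_{\mathrm{BS}}$, so that each non-sink node has exactly one out-edge, and the sink has only in-edges. Assign flows additively: for each edge $e$, let $f(e)$ be the number of sources in the component cut off by removing $e$ on the side away from $z_{\mathrm{BS}}$ (recall $w(z_i)=1$). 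This gives $f_z^+-f_z^-=1$ at each source, $f_s^+=f_s^-$ at each (degree-two) Steiner point, and $f_{\mathrm{BS}}^-=n$ with $f_{\mathrm{BS}}^+=0$ at the sink, so all of conditions (1)--(5) in the definition of an FQST are met.

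Second, I would invoke the definition of $T_{\mathrm{opt}}$: it is an optimal solution to the node-weighted FQSTP, meaning $L_c(T_{\mathrm{opt}})\le L_c(T)$ for every FQST $T$ on $Z\cup\{z_{\mathrm{BS}}\}$. Applying this with $T=BST(Z)$ gives the claim.

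There is no real obstacle here; the only thing that needs a moment of care is ensuring the directed/flow structure is well-defined on $BST(Z)$, but that follows from it being a tree together with the additive flow rule. The lemma is preparatory: combined with Lemma~\ref{lemB} it will convert an upper bound on $L_c(T_{\mathrm{opt}})$ into an upper bound on the number of Steiner points $k$ that can appear in a node-weighted MFQST, which is the actual goal of the surrounding discussion.
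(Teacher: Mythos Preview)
Your proposal is correct and matches the paper's treatment: the paper states this lemma without proof, since it is immediate from the fact that $BST(Z)$ is a feasible FQST and $T_{\mathrm{opt}}$ minimises $L_c$ over all such trees. Your verification that $BST(Z)$ satisfies the FQST axioms is sound and simply makes explicit what the paper leaves implicit.
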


In the previous lemma it may be possible to construct a spanning tree (in polynomial time) that provides a tighter bound than a minimum spanning
tree. An improvement of this bound, or the bound in Lemma \ref{lemB}, would most likely lead to a better value of $B$ in the next proposition.

\begin{proposition}Suppose that a node-weighted MFQST $T$ contains $k$ Steiner points. Then
$k\leq\displaystyle\frac{1}{c}\left\{L_c(BST(Z))-(n+k+1)^{-1}\displaystyle\sum |z_iz_{\mathrm{BS}}|^2\right\}$, which can be rewritten (by
solving a quadratic equation in $k$, in order to make $k$ the subject) in the form $k\leq B$ where $B$ does not contain $k$.
\end{proposition}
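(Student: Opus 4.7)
The plan is to pinch the cost $L(T)$ from above and below using the two preceding lemmas, then reinterpret the resulting relation as a quadratic in $k$. First I would write $L_c(T)=L(T)+ck$ and invoke the previous lemma to obtain $L(T)+ck\leq L_c(BST(Z))$. Second I would apply Lemma~\ref{lemB} to $T$, which gives $L(T)\geq (n+k+1)^{-1}\sum_{i\leq n}|z_iz_{\mathrm{BS}}|^2$. Subtracting and dividing by $c$ yields the first displayed inequality of the proposition directly.

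For the reformulation into the form $k\leq B$, I would clear the denominator by multiplying through by $c(n+k+1)$, producing an inequality of the shape $ck^2+\alpha k+\beta\leq 0$ whose coefficients are explicit polynomials in $n$, $c$, $L_c(BST(Z))$ and $\sum_i|z_iz_{\mathrm{BS}}|^2$ but do not depend on $k$. Since the leading coefficient $c$ is positive, the inequality forces $k$ to lie below the larger root of the associated quadratic; this root is the desired constant $B$, and extracting it is a routine application of the quadratic formula.

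The only subtle point is that Lemma~\ref{lemB} is stated for the explicitly bounded MFQST, whereas the tree $T$ here is the node-weighted optimum. I would resolve this by observing that the argument in Lemma~\ref{lemB} derives its lower bound from a shortest-path estimate for a single unit of flow traversing a path containing at most $n+k$ intermediate vertices; that estimate applies to \emph{any} FQST on the given sources and sink with exactly $k$ Steiner points, regardless of whether the bound on $|S|$ was imposed explicitly or emerged from the node weight $c$. Once this reuse is justified, nothing beyond algebraic bookkeeping remains, so this transfer of Lemma~\ref{lemB} is the only step that requires any genuine thought.
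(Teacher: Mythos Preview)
Your proposal is correct and follows essentially the same approach as the paper: the paper's proof is a one-line appeal to the identity $k=\frac{1}{c}(L_c(T)-L(T))$ together with the two preceding lemmas, which is exactly what you unpack. Your remark about Lemma~\ref{lemB} being stated for the explicitly bounded case but applying to any FQST with $k$ Steiner points is a valid observation that the paper glosses over, and your treatment of the quadratic rearrangement is more explicit than the paper's parenthetical hint.
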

\begin{proof}The result follows directly from the previous two lemmas after noting that $k=\displaystyle\frac{1}{c}\left(L_c(T)-L(T)\right)$.
\end{proof}

\section{A Geometric Linear-time algorithm for fixed topologies}\label{secTop}
In this section we describe a geometric linear time algorithm for constructing an MFQST with a given full topology, but where all Steiner points
are assumed to have degree 3. This is a key step in any  general algorithm for constructing MFQSTs over all possible topologies. It can
potentially be combined with an exhaustive search, along with appropriate pruning methods, to build an exact method for finding MFQSTs (along
the lines of the method underlying GEOSTEINER \cite{warme}), or can be combined with appropriate heuristic search techniques as part of an
approximation algorithm. As mentioned before, for many variants of the problem the restriction of the degree of Steiner points (especially to
degree 3) is a very natural one. An algebraic linear time algorithm does exist in the form of a solution to the system of diagonally dominant
linear equations discussed in the proof of Theorem \ref{thrIff} (see also \cite{ganley} for the related algorithm without flow), but this
algorithm reveals very little directly about the structure of locally optimal FQSTs.

The general strategy of the algorithm is similar to Melzak's algorithm for constructing fixed topology Euclidean Steiner trees; see for instance
\cite{hwang}. We begin by finding two sources that are adjacent to a single Steiner node in the given topology, and replace the pair of sources
by a single \textit{quasi-source} whose location and mass can be explicitly computed. This procedure is repeated recursively: at each stage
there exists a Steiner point adjacent to two nodes, each of which is either a source or quasi-source; these nodes along with the Steiner point
are replaced by a new quasi-source. The procedure continues until there are no sources left, and only a single quasi-source in the tree. The
position and mass of this quasi-source allows one to construct the Steiner point adjacent to the sink, and then a backtracking procedure allows
one to construct each of the remaining Steiner points in turn.

\textbf{Notation:} As before, let $Z=\{z_1,\ldots , z_n\}$ be the set of sources for a full MFQST $T$, and let $z_{\rm {BS}}$ be the sink. We
think of each source and the sink as being a vector in $\mathbb{R}^2$ representing the position of the node in Cartesian coordinates. Associated
with each source $z_i$ is a mass $w(z_i)$, representing the amount of flow from that source. As in the earlier sections, we assume  that
$w(z_i)=1\ \forall i \in \{1,\ldots , n\}$ under a suitable choice of units, however the algorithm can easily be adapted to situations where
different sources have different flows. Let ${S}=\{s_1,\ldots , s_{n-1}\}$ be the set of Steiner points of $T$. We associate a mass with each
Steiner point $s_i$ and the sink additively;  for example, the mass of each $s_i$ is the sum of the masses of the two nodes whose out-edges are
the in-edges of $s_i$. Nodes that are not Steiner points (including the sink) will be referred to as \textit{terminals}.

We now define the concept of a \emph{quasi-source}. Given an MFQST containing a Steiner point adjacent to two terminals (neither of which is a
sink), a quasi-source is a new terminal that replaces these three nodes, such that the remaining Steiner points of the resulting MFQST on this
reduced set of terminals are in the same locations as in the original tree. More formally, let $T$ be a full MFQST  with terminals $Z \cup
\{z_{\rm {BS}}\}$ (where $z_{\rm {BS}}$ is the sink), Steiner points $S = \{ s_1,\ldots , s_{n-1}\}$ each with a given mass, and full topology
$\mathcal{T}$. Suppose we are given $z_1, z_2 \in Z$ and $s_1 \in S$  such that $z_1$ and $z_2$ are each adjacent to $s_1$, and let $v$ be the
third node  (other than $z_1$ and $z_2$) adjacent to $s_1$. This is illustrated in Fig. \ref{trans}.
\begin{figure}[ht]
\begin{center}
 \includegraphics[width=4in]{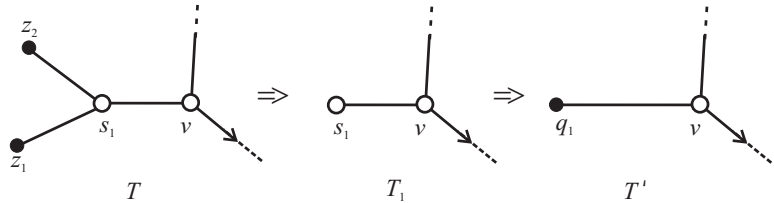}
 \caption{The transition from $T$ to $T'$ (via $T_1$).}\label{trans}
\end{center}
\end{figure}
Let $T_1$ be the subtree of $T$ in which $z_1$, $z_2$ and their two incident edges have been removed, and let $\mathcal{T}_1$ be the topology of
$T_1$. Note that if we treat $s_1$ as a terminal, then $\mathcal{T}_1$ is a full topology. Given a point $q_1$ with mass $w(q_1)$ let $T'$ be
the MFQST with terminals  $\{q_1,z_3,\ldots , z_n\} \cup z_{\rm {BS}}$, topology $\mathcal{T}_1$ (where node $s_1$ has been relabelled $q_1$),
and where the weights of the Steiner points in $T'$ are the same as their weights in $T$. Then $q_1$ is said to be a  \emph{quasi-source}
replacing $z_1, z_2$ and $s_1$, if the Steiner points of $T'$ are in the same locations as the corresponding Steiner points of $T$ and $s_1=
\mathcal{C}(w(q_1) q_1, w(v) v)$. The transition from $T$ to $T'$ (via $T_1$) is illustrated in Fig.~\ref{trans}.

It is important to note that the above definition involves a slight abuse of notation -- strictly speaking, $T'$ is not an MFQST since the mass
assigned to a quasi-source no longer has an obvious interpretation in terms of flow. It is convenient, however,  to treat $T'$ as though it is
an MFQST. Because the weights of Steiner points do not change when we introduce a quasi-source, it follows that such `MFQSTs' that include
quasi-sources as terminals do not necessarily have the additive property for all Steiner points (but, by definition, still satisfy the centre of
mass properties).

In the lemmas that follow we show that for a given $T$ we can always find a quasi-source $q$, such that the position and mass of $q$ depend only
on the known masses of nodes in the tree and the locations of the two terminals it is replacing. We distinguish three methods of constructing
such a quasi-source, depending on the nature of the two terminals it replaces.

\begin{lemma}\label{qs1}
Let $z_1$ and $z_2$ be two sources of $T$, both adjacent to a single Steiner point $s$. Let $q$ be the midpoint of $z_1$ and $z_2$ (ie, $q= (z_1
+ z_2)/2$) and let $w(q)=2$. Then $q$ is a quasi-source replacing $z_1$ and $z_2$.
\end{lemma}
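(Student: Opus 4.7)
The strategy is to verify the two requirements of a quasi-source in turn: the centre-of-mass identity $s = \mathcal{C}(w(q)q, w(v)v)$ at the replaced Steiner point, and the position-invariance of every remaining Steiner point when passing from $T$ to $T'$.

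For the identity at $s$, I would apply Proposition~\ref{propCen} directly. Since $z_1$ and $z_2$ are unit-mass sources both flowing into $s$ and $v$ is the out-neighbour of $s$, the flow along $sv$ is $2$, and Proposition~\ref{propCen} gives $s = \mathcal{C}(1\cdot z_1, 1\cdot z_2, 2\cdot v)$. Applying the mass-point merging rule of Fig.~\ref{figMerge} to combine $\{1\cdot z_1, 1\cdot z_2\}$ into a single weighted point $2\cdot q$ with $q = (z_1+z_2)/2$ yields $s = \mathcal{C}(2q, 2v) = (q+v)/2$. This matches $\mathcal{C}(w(q)q, w(v)v)$ with $w(q)=2$ by hypothesis and $w(v)=2$ interpreted as the flow-weight on $sv$, which is also the mass of $s$ that appears in the backtracking formula.

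For position-invariance, let $s'$ be any Steiner point of $T$ other than $s$. If $s'$ is not adjacent to $s$, then its set of neighbours and associated edge-flows are identical in $T$ and $T'$, so its centre-of-mass identity, and hence its optimal position, is unchanged. The only non-trivial case is $s'=v$, when $v$ itself is a Steiner point (if $v$ is the sink, its position is fixed and there is nothing to check). I would write $v$'s identity in $T$ in the form $(2+W)v = 2s + R$, where $W$ and $R$ collect the total weight and weighted position-sum of $v$'s neighbours other than $s$, and substitute $2s = q+v$ from the first part to obtain $(1+W)v = q+R$. This is exactly the centre-of-mass identity for $v$ in $T'$ with $q$ replacing $s$ on the incident edge; hence $v$ retains its location.

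The main obstacle is purely notational: the mass $w(q)=2$ plays two distinct roles, as the reconstruction weight in $s = \mathcal{C}(2q, 2v)$ and, in effect, as a weight-$1$ contribution on the edge $qv$ inside $v$'s centre-of-mass condition in $T'$. This halving reflects the midpoint property $s = (q+v)/2$, and it is precisely the loss of the additive-flow property at Steiner points adjacent to a quasi-source that the definition of quasi-source warns about.
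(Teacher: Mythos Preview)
Your argument is correct and, for the reconstruction identity $s=\mathcal{C}(2q,2v)$, is exactly the paper's: merging the unit masses at $z_1,z_2$ into the mass-$2$ point $q=(z_1+z_2)/2$ leaves the centre of mass of $\{z_1,z_2,v\}$ unchanged, so $s=\mathcal{C}(1\cdot z_1,1\cdot z_2,2\cdot v)=\mathcal{C}(2q,2v)=(q+v)/2$. The paper's proof consists of precisely this observation and then says ``Hence the result follows.''

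You go further than the paper by explicitly checking the second clause of the quasi-source definition, that the remaining Steiner points of $T'$ keep their positions from $T$. Your substitution of $2s=q+v$ into $v$'s balance equation $(2+W)v=2s+R$ to obtain $(1+W)v=q+R$ is the right way to make this rigorous, and your closing paragraph correctly identifies that the resulting identity assigns $q$ an effective weight of~$1$ in $v$'s centre-of-mass condition rather than the nominal $w(q)=2$ used in the reconstruction formula. This is exactly the breakdown of additivity at the Steiner point adjacent to a quasi-source that the paper flags in the paragraph immediately preceding Lemma~\ref{qs1}; the paper leaves this verification implicit, whereas you spell it out.
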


\begin{proof}
Note that $q$ is the centre of mass of $z_1$ and $z_2$, and $w(q)=w(z_1) + w(z_2)$. Let $v$ be the third neighbour of $s$, other than $z_1$ and
$z_2$. It follows that the centre of mass of $z_1$, $z_2$ and $v$ is the same as the centre of mass of $q$ and $v$. Hence the  result follows.
\end{proof}

\begin{lemma}\label{qs2}
Let $q$ and $z$ be two terminals of $T$, a quasi-source and source respectively, both adjacent to a single Steiner point $s_2$. Let $s_1$ be the
Steiner point (not in $T$) adjacent to the two terminals replaced by $q$ in a previous minimum quadratic flow-dependent Steiner tree $T_1$.  Let
$w_0= w(q)$ and $w_1=w(s_1)$ (in $T_1$). Define the point $q_1$ as follows:
$$q_1:= q + \frac{w_0+w_1}{w_0+w_1 + w_0w_1} (z-q)$$
with mass
$$w(q_1):=\frac{w_0+w_1 + w_0w_1}{w_0+w_1}. $$
Then $q_1$ is a quasi-source  replacing $q$, $z$ and $s_2$.
\end{lemma}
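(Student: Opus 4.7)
I plan to verify the defining relation of a quasi-source for $q_1$ by direct computation. The key identity to establish is
$$s_2 \;=\; \mathcal{C}\bigl(w(q_1)\,q_1,\,(w_1+1)\,v\bigr),$$
where $v$ is the third neighbour of $s_2$ in $T$ and the mass $w_1+1$ of $v$ in this centroid is the flow on the edge $s_2 v$ in the original MFQST.

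First, I invoke the quasi-source property of $q$ itself, inherited by induction from the application of Lemma \ref{qs1} or Lemma \ref{qs2} that produced it. In the earlier tree $T_1$ this property reads
$$s_1 \;=\; \mathcal{C}(w_0\,q,\, w_1\,s_2) \;=\; \frac{w_0\,q + w_1\,s_2}{w_0 + w_1}.$$
Second, by Proposition \ref{propCen} applied at $s_2$, whose real in-neighbours are $s_1$ and $z$ with flows $w_1$ and $1$ and whose out-neighbour $v$ carries flow $w_1+1$,
$$s_2 \;=\; \frac{w_1\,s_1 + z + (w_1+1)\,v}{2(w_1+1)}.$$

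Third, I substitute the first expression for $s_1$ into the second, clear denominators, and collect all $s_2$-terms on the left; this yields
$$\bigl[2(w_1+1)(w_0+w_1) - w_1^2\bigr]\,s_2 \;=\; w_0 w_1\,q + (w_0+w_1)\,z + (w_1+1)(w_0+w_1)\,v.$$
The pivotal algebraic observation---verified by direct expansion---is the factorisation
$$2(w_1+1)(w_0+w_1) - w_1^2 \;=\; (w_0+w_1)\bigl(w(q_1) + w_1 + 1\bigr),$$
which holds precisely because $w(q_1)(w_0+w_1) = w_0 + w_1 + w_0 w_1$ in the lemma's definition. Dividing the previous display through by $(w_0+w_1)$ and noting that the definition of $q_1$ gives $w(q_1)\,q_1 = \bigl(w_0 w_1\,q + (w_0+w_1)\,z\bigr)/(w_0+w_1)$, the equation rearranges to $\bigl(w(q_1)+w_1+1\bigr)\,s_2 = w(q_1)\,q_1 + (w_1+1)\,v$, which is the required centroid identity.

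The preservation of the positions of the remaining Steiner points in $T'$ then follows from the same identity: the only centroid condition whose local neighbourhood is altered by the replacement is the one at $v$, and the derivation above shows that the contribution $w(q_1)\,q_1$ made to that centroid in $T'$ coincides with the contribution $(w_1+1)\,s_2$ made in $T$. The main obstacle is the algebraic bookkeeping around the factorisation of $2(w_1+1)(w_0+w_1) - w_1^2$; once one spots this, the rest of the argument collapses into the required centroid form.
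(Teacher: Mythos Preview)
Your verification of the central identity $s_2=\mathcal{C}\bigl(w(q_1)\,q_1,(w_1+1)\,v\bigr)$ is correct, and the algebra (including the factorisation of $2(w_1+1)(w_0+w_1)-w_1^2$ and the identity $w(q_1)\,q_1=\bigl(w_0w_1\,q+(w_0+w_1)\,z\bigr)/(w_0+w_1)$) checks out. This is precisely the relation the paper needs, and it is all the paper's own proof of the lemma actually establishes.

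Your route, however, is quite different from the paper's. The paper argues synthetically: it lets $x$ be the point where the line $q\,s_1\,s_2$ meets $vz$, builds auxiliary points $t,a,b,c$ on parallels to $vz$ and on $qz$, and repeatedly applies similar triangles ($\triangle tqs_1\sim\triangle vxs_1$, $\triangle tqa\sim\triangle vza$, etc.) together with mass-point ratios to locate the intersection $q_1$ of the line $vs_2$ with $qz$ and to read off the ratio $|q_1s_2|/|q_1v|$. Your approach bypasses all of this geometry: you simply write the two centroid equations at $s_1$ and $s_2$, eliminate $s_1$, and solve linearly for $s_2$. This is shorter and more mechanical, and it generalises immediately to the setting of Lemma~\ref{qs3} without a new figure; the paper's approach, on the other hand, makes the collinearity of $v,s_2,q_1$ (and hence the very existence of a quasi-source on the segment $qz$) geometrically transparent.

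One small caution about your final paragraph: the claim that ``the contribution $w(q_1)\,q_1$ \dots\ coincides with the contribution $(w_1+1)\,s_2$'' is not literally true --- from your own identity one has $w(q_1)\,q_1=(w(q_1)+w_1+1)\,s_2-(w_1+1)\,v\neq(w_1+1)\,s_2$ in general. Fortunately this does not damage the argument: the paper's own proof also establishes only the centroid relation at $s_2$, and the preservation of the remaining Steiner points is really carried by the recursive structure of the algorithm (each backtracking step uses exactly the relation you proved), not by a direct comparison of numerator terms at $v$. You could safely drop or soften that sentence.
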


\begin{proof}
The aim is to show that the point $q_1$ as defined in the statement of the lemma is a suitable quasi-source for replacing $q$, $z$ and $s_2$.
Consider the construction illustrated in Fig.~\ref{lemqz}.
\begin{figure}[ht]
\begin{center}
 \includegraphics[width=3.5in]{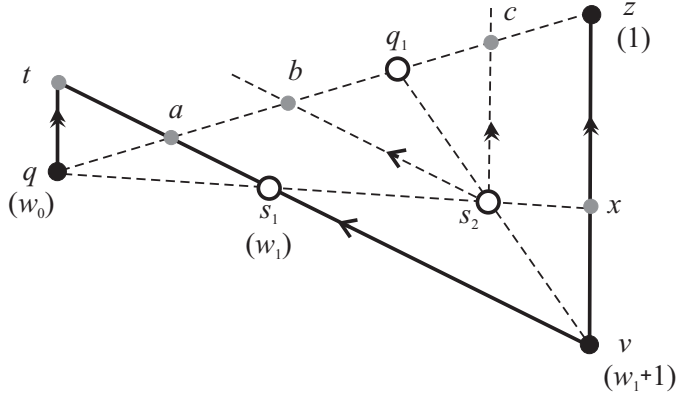}
 \caption{Construction for the proof of Lemma \ref{qs2}. The weights of some points are shown in parentheses. The single and double
arrows are used to indicate parallel lines.}\label{lemqz}
\end{center}
\end{figure}
Let $v$ be the third node adjacent to $s_2$ other than $s_1$ and $z$. Note that $v$ is either a Steiner point or the sink, and $w(v) = w_1+1$.

In $T_1$, $s_1$ is the centre of mass of $q$ and $s_2$. Hence $q$, $s_1$ and $s_2$ are collinear. Let $x$ be the point where the line through
$q$, $s_1$ and $s_0$ intersects the line segment $vz$. By mass-point geometry,
\begin{equation}\label{qse1}
 \frac{|vx|}{|xz|} = \frac{1}{w_1+1}.
\end{equation}
Furthermore, we can consider $x$ to have an associated mass of $w(x) = w_1 +2$. It follows that $\frac{|qs_1|}{|s_1s_2|} = \frac{w_1}{w_0}$ and
$\frac{|s_1s_2|}{|s_2x|} = \frac{w_1+2}{w_1}$, which, in terms of ratios, gives:
\begin{equation}\label{qse2}
 |qs_1|:|s_1s_2|:|s_2x|  = w_1(w_1+2): w_0 (w_1+2): w_1 w_0 .
\end{equation}
Now, let $L$ be the line through $q$ parallel to $vz$, and let $t$ be the point where the line through $s_1$ and $v$ intersects $L$. Let $a$ be
the point where $s_1t$ intersects $qz$. Since $\triangle tqs_1 \sim \triangle vxs_1$ it follows from (\ref{qse2}) that $\frac{|tq|}{|xv|} =
\frac{w_1(w_1+2)}{2w_0(w_1+1)}$, and hence, by (\ref{qse1}), $\frac{|tq|}{|zv|} = \frac{w_1}{2w_0(w_1+1)}$. Since, $\triangle tqa \sim \triangle
vza$, we now obtain:
\begin{equation}\label{qse3}
 \frac{|qa|}{|qz|} = \frac{w_1}{w_1+2w_0(w_1+1)}.
\end{equation}
Let $b$ be the point on $qz$ such that $s_2 b \parallel s_1 a$. Since $\triangle qas_1 \sim \triangle qbs_2$, we obtain from (\ref{qse2}) and
(\ref{qse3}):
\begin{equation}\label{qse4}
 \frac{|ab|}{|qz|} = \frac{w_0}{w_1+2w_0(w_1+1)}.
\end{equation}
Similarly, let $c$ be the point on $qz$ such that $s_2 c \parallel xz$. Since $\triangle cs_2q \sim \triangle zxq$, we deduce that:
\begin{equation}\label{qse5}
 \frac{|cz|}{|qz|} = \frac{w_1w_0}{(w_1+w_0)(w_1+2)+ w_1w_0}.
\end{equation}

Equations~(\ref{qse3}), (\ref{qse4}) and (\ref{qse5}), give us the locations of points $a$, $b$ and $c$ respectively. Now let $q_1$ be the
intersection of the line through $v$ and $s_2$ with $qz$. Since $\triangle bcs_2 \sim \triangle azv$, we obtain:
\begin{equation}\label{qse6}
 \frac{|bq_1|}{|bc|} = \frac{|ab|+|bq_1|}{|az|}.
\end{equation}
Equation~(\ref{qse6}) allows us to compute $|bq_1|$, resulting in the location of $q_1$ as given in the statement of the lemma. Finally, we note
that $\triangle bq_1s_2 \sim \triangle aq_1v$, hence $\frac{|q_1s_2|}{|q_1v|} = \frac{|bq_1|}{|aq_1|}$ from which we obtain the mass of $q_1$ as
stated in the lemma.
\end{proof}

\begin{lemma}\label{qs3}
Let $q_1$ and $q_2$ be two quasi-sources of $T$, both adjacent to a single Steiner point $s_3$. For $i=1,2$, let $s_i$ be the Steiner point
adjacent to the two terminals replaced by $q_i$ in a previous minimum quadratic flow-dependent Steiner tree $T_i$, and let $w_{0i}= w(q_i)$ and
$w_i=w(s_i)$ (in $T_i$). Define the point $q_3$ as follows:
$$q_3:= q_1 + \frac{w_{02}w_2(w_1+w_2)}{w_1w_2(w_{01} + w_{02}) + w_{01}w_{02}(w_{1} + w_{2})} (q_2-q_1)$$
with mass
$$w(q_3):=\frac{w_1w_2(w_{01} + w_{02}) + w_{01}w_{02}(w_{1} + w_{2})}{(w_{1} + w_{01})(w_{2} + w_{02})}. $$
Then $q_3$ is a quasi-source for $T$, replacing $q_1$, $q_2$ and $s_3$.
\end{lemma}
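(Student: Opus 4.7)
The plan is to follow the same mass-point-geometric template as the proof of Lemma~\ref{qs2}, verifying directly that the point $q_3$ and mass $w(q_3)$ given in the statement reproduce $s_3$ via the centre-of-mass relation $s_3 = \mathcal{C}(w(q_3)q_3, w(v)v)$ for the appropriate $w(v)$, and that the positions of the remaining Steiner points of the tree are unchanged after substituting the single terminal $q_3$ for $q_1$, $q_2$ and $s_3$. The key geometric fact that pins down $q_3$ uniquely is that it must simultaneously lie on the line $vs_3$ (so that the recovery relation is possible) and on line $q_1q_2$ (so that it can be constructed in the forward pass without knowledge of $v$).

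First I would record the two ``history'' collinearities that constrain $s_3$. Because $q_1$ is a quasi-source with associated previous Steiner point $s_1$ of mass $w_1$, the points $q_1, s_1, s_3$ are collinear with $|q_1s_1|:|s_1s_3| = w_1 : w_{01}$; symmetrically, $q_2, s_2, s_3$ are collinear with $|q_2s_2|:|s_2s_3| = w_2 : w_{02}$. Combined with the centre-of-mass condition $s_3 = \mathcal{C}(w_{01}q_1, w_{02}q_2, (w_{01}+w_{02})v)$ at the Steiner point $s_3$ of the current tree, these three relations symbolically fix the position of $s_3$ inside the triangle $q_1q_2v$ in terms of the four mass parameters.

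Next, as in the proof of Lemma~\ref{qs2}, I would let $q_3^\ast$ denote the intersection of line $vs_3$ with line $q_1q_2$ and introduce auxiliary lines through $q_1$ and $q_2$ parallel to $s_3v$. In each of the two sub-triangles containing a cevian $q_is_is_3$, a similar-triangle argument (directly analogous to the $\triangle tqs_1 \sim \triangle vxs_1$ step of Lemma~\ref{qs2}) gives a ratio along line $q_1q_2$ in terms of $w_{0i}$, $w_i$ and $|s_3v|$. Combining the two ratios and clearing denominators should collapse to the coefficient appearing in the lemma's formula, so $q_3^\ast = q_3$. A final similar-triangle step then computes $|q_3s_3|/|s_3v|$, and the stated $w(q_3)$ follows from the centre-of-mass relation $w(q_3)|q_3s_3| = w(v)|s_3v|$ with $w(v)=w_1+w_2$ (the mass of $s_3$ in the earliest tree, in which $s_1$ and $s_2$ were still present as explicit Steiner points).

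The principal obstacle is bookkeeping rather than conceptual difficulty: Lemma~\ref{qs2} had trivial history on the source side (the source $z$ contributed no previous Steiner point), so only one pair of similar triangles had to be tracked, whereas here both $q_1$ and $q_2$ carry nontrivial history $(s_i, w_i)$ and two symmetric pairs of similar triangles must be handled simultaneously before the ratios can be combined. The symmetry of the final expression under the index swap $1 \leftrightarrow 2$ (whereby the numerator becomes $w_{01}w_1(w_1+w_2)$ with the denominator unchanged) provides a useful consistency check along the way.
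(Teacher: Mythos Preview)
Your overall approach matches the paper's: define $q_3$ as the intersection of the line $vs_3$ with the segment $q_1q_2$, then use a pair of similar-triangle arguments (one for each quasi-source side, generalising the single-sided construction of Lemma~\ref{qs2}) to pin down the position of $q_3$ on $q_1q_2$ and the ratio $|q_3s_3|:|s_3v|$, from which the mass follows. The paper carries this out with lines $L_i$ through $q_i$ parallel to $vs_{3-i}$ (rather than to $s_3v$ as you propose), but that is a cosmetic difference.

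There is, however, a genuine error in your set-up. The relation you write down at $s_3$, namely
\[
s_3 \;=\; \mathcal{C}\bigl(w_{01}q_1,\,w_{02}q_2,\,(w_{01}+w_{02})v\bigr),
\]
is \emph{not} correct. In the current reduced tree $T$ the node $s_3$ is topologically adjacent to $q_1,q_2,v$, but---as the paper warns immediately after defining quasi-sources---such reduced trees need not satisfy additivity, and in particular $s_3$ is \emph{not} the centre of mass of $q_1,q_2,v$ with the quasi-source masses. The relation that actually holds (and that the paper's construction uses implicitly via the figure) is the one inherited from the \emph{original} tree, where the neighbours of $s_3$ were the Steiner points $s_1,s_2$ and $v$:
\[
s_3 \;=\; \mathcal{C}\bigl(w_1 s_1,\,w_2 s_2,\,(w_1+w_2)v\bigr).
\]
This is why the ``history'' collinearities $s_i\in q_is_3$ with $|q_is_i|:|s_is_3|=w_i:w_{0i}$ are genuinely needed: they let you eliminate the unknown intermediate points $s_1,s_2$ from the correct relation above. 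With your stated relation those collinearities would be redundant, and you would obtain simply $q_3=\mathcal{C}(w_{01}q_1,w_{02}q_2)$ with $w(q_3)=w_{01}+w_{02}$, which contradicts the formula in the lemma. Once you replace your third relation by the correct one, the rest of your plan (two symmetric similar-triangle computations, then combine along $q_1q_2$) goes through and reproduces the paper's argument.
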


\begin{proof}
The proof is similar to the proof of Lemma~\ref{qs2}. Consider the construction given in Fig.~\ref{lemqq}, where again $v$ is the third node
adjacent to $s_3$ other than $s_1$ and $s_2$, and $w(v) = w_1 + w_2$.
\begin{figure}[ht]
\begin{center}
 \includegraphics[width=3.5in]{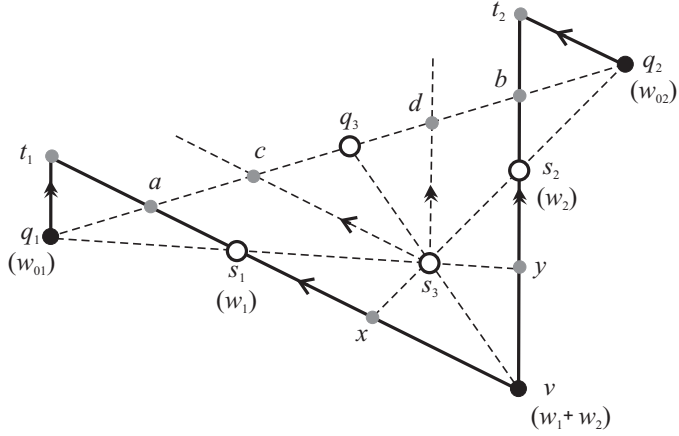}
 \caption{Construction for the proof of Lemma~\ref{qs3}.}\label{lemqq}
\end{center}
\end{figure}
 let $L_1$ be the line through $q_1$ parallel to $vs_2$, and let $L_2$ be the line
through $q_2$ parallel to $vs_1$; for $i=1,2$, let $t_i$ be the point where the line through $s_i$ and $v$ intersects $L_i$. As in the proof of
Lemma~\ref{qs2}, we can compute the points $a$ and $b$, where $q_1q_2$ intersects $s_1t_1$ and $s_2t_2$ respectively, and the points $c$ and $d$
on $q_1q_2$ such that $s_3c$ and $s_3 d$ are parallel to $va$ and $vb$ respectively. This again allows us to locate the point $q_3$  at the
intersection of the line through $v$ and $s_3$ with $q_1q_2$, with location and mass as given in the statement of the lemma.
\end{proof}

We now describe the algorithm for locating the Steiner points of $T$ by successively replacing pairs of terminals by quasi-sources using the
above three lemmas.

\begin{quote}
\noindent \textbf{Algorithm MFQST}

\smallskip

    \textbf{Input:} A set $Z$ of $n$ sources, a sink $z_{\rm {BS}}$, and
     a full topology $\mathcal{T}$ for $Z \cup \{z_{\rm {BS}} \}$.

    \smallskip

    \textbf{Output:} A minimum flow-dependent quadratic Steiner tree $T$  for $Z \cup \{z_{\rm {BS}} \}$ with topology $\mathcal{T}$, along with its
    cost $L(T)$.

    \begin{enumerate}
      \item For each $z_i \in Z$ set $w(z_i)=1$; for each Steiner point $s_j$ of $\mathcal{T}$ compute $w(s_j)$, via additivity; set
      $w(z_{\rm {BS}})= n$.
      \item Find a Steiner point adjacent to two terminals (each of which is a source or quasi-source) and replace the three nodes by a new
      quasi-source, using Lemmas~\ref{qs1}, \ref{qs2} and \ref{qs3}. Update the Steiner tree. Repeat until all Steiner points have been replaced.
      \item The Steiner tree will now contain only two terminals, a quasi-source and the sink. Use recursive back-tracking to determine the positions
      of the Steiner points (in the reverse order to the order of replacement in the previous stage) where each Steiner point is at the centre of mass
      of its two neighbouring nodes.
      \item Once all quasi-source replacements have been undone, the tree $T$ with all of its Steiner points will have been correctly constructed.
      The cost of $T$ is computed using node weights and edge lengths.
    \end{enumerate}
\end{quote}

Before proving the correctness of Algorithm MFQST, it is helpful to illustrate the running of the algorithm with a simple example, shown in Fig.
\ref{example}, where all locations are given in Cartesian coordinates.
\begin{figure}[ht]
\begin{center}
 \includegraphics[width=4in]{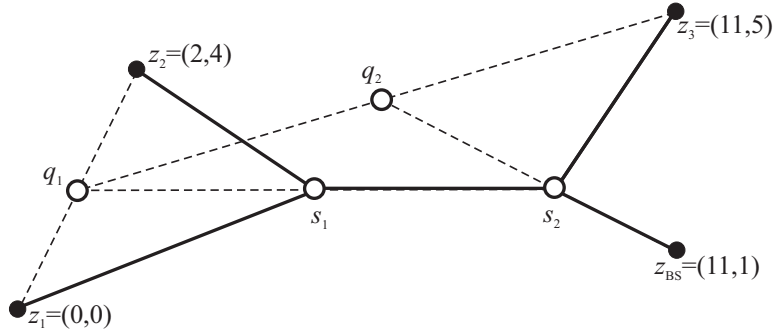}
 \caption{Example of the construction of an MFQST using the algorithm.}\label{example}
\end{center}
\end{figure}
The example contains four terminals: three sources $z_1= (0,0), z_2=(2,4), z_3 = (11,5)$, and a sink $z_{\rm BS} = (11,1)$. The full topology of
the tree is shown in unbroken lines. In Step~1, the weights of the two Steiner points are computed by additivity: $w(s_1)=2$ and $w(s_2)=3$. In
Step~2 we replace each Steiner point and two adjacent terminals by a quasi-source. The first quasi-source $q_1$ replaces $s_1$ and the two
sources $z_1$ and $z_2$. Hence, by Lemma~\ref{qs1}, $q_1=(z_1+z_2)/2=(1,2)$ and $w(q_1)=2$. In the resulting tree, the remaining Steiner point
$s_2$ is adjacent to $q_1$ and $z_3$. We replace these three nodes by a new quasi-source $q_2$ where, by Lemma~\ref{qs2}, $q_2= q_1+(z_3-q_1)/2=
(6, 3.5)$ and $w(q_2)=2$. This concludes Step~2. For Step~3 we determine the positions of the Steiner points in reverse order to Step~2. The
Steiner point $s_2$ lies at the centre of mass of $q_2$ and $z_{\rm BS}$, hence $s_2=\frac{2}{5}q_2 + \frac{3}{5}z_{\rm BS}=(9,2)$. Similarly,
$s_1$ lies at the centre of mass of $q_1$ and $s_2$, where, for this branch the relevant mass of $s_2$ corresponds to the flow on the edge
$s_1s_2$ in the final tree (ie, $2$). Hence $s_1=\frac{1}{2}q_1 + \frac{1}{2}s_2=(5,2)$. This completes Step~3, giving the MFQST $T$ with
$L(T)=102$.

\begin{theorem}\label{algorithm}
Given a set of terminals $Z$ and a corresponding full topology $\mathcal{T}$, Algorithm MFQST correctly computes a MFQST $T$ for $Z$.
Furthermore, the algorithm runs in time $O(n)$.
\end{theorem}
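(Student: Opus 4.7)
The plan is to establish correctness by induction and runtime by per-step accounting. Once correctness yields that every Steiner point of the output tree $T$ sits at the centre of mass of its three original neighbours (weighted by the edge flows of Proposition~\ref{propCen}), an appeal to Theorem~\ref{thrIff} together with the uniqueness of locally minimal trees for a fixed non-degenerate topology (the strict convexity argument of Section~\ref{secProp}) identifies $T$ as the MFQST with topology $\mathcal{T}$.

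For Step~2, I would induct on the number $k$ of quasi-source replacements, maintaining the invariant that the reduced tree has the same Steiner-point positions as the true MFQST of its reduced terminal set. The inductive step is one of Lemmas~\ref{qs1}, \ref{qs2}, \ref{qs3}, chosen according to whether the cherry being contracted is source/source, source/quasi-source, or quasi-source/quasi-source; each lemma was proved precisely to preserve this invariant by supplying the correct position and mass of the new quasi-source. In addition, at each iteration with a Steiner point remaining, I need the combinatorial fact that some Steiner point is adjacent to two non-sink terminals. This follows by rooting the current full topology at the sink and taking an internal node of maximum depth: both of its children in the rooted tree are then leaves, i.e.\ non-sink terminals.

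Step~3 is the inductive reversal. When the quasi-source $q$ that replaced nodes $(a, b, s)$ is uncontracted, the defining property of a quasi-source gives $s = \mathcal{C}(w(q)q, m_v v)$, where $v$ is the third original neighbour of $s$ (already located by this stage) and $m_v$ is the appropriate mass for $v$. The subtlety, visible in the worked example, is that $m_v$ is the flow on the edge $sv$ in the final tree rather than the globally-assigned weight of $v$; these coincide when $v$ is the sink (because the sink has degree one in a full topology, so $w(z_{\mathrm{BS}})=n$ equals the flow on its unique incident edge) but differ in general when $v$ is itself a Steiner point. With $m_v$ chosen this way, the reintroduced $s$ satisfies the centre-of-mass equality of Proposition~\ref{propCen} over its three original neighbours, so Theorem~\ref{thrIff} applies to the finished tree.

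For the running time, Step~1 is a single bottom-up pass of $\mathcal{T}$, giving weights in $O(n)$. In Step~2 I maintain a queue of Steiner points currently adjacent to two non-sink terminals; initialisation costs $O(n)$, and each of the $n-1$ contractions alters only the neighbourhood of the newly created quasi-source, so at most one new queue entry is produced per iteration, giving $O(1)$ amortised per step. Step~3 is a single reverse-order pass, also $O(n)$, and Step~4 sums $2n-1$ edge costs. The main obstacle I anticipate is the bookkeeping that links the node-weights produced by Lemmas~\ref{qs2} and~\ref{qs3} (which are rational in the intermediate masses, not simple sums) to the edge-flows required by Proposition~\ref{propCen}; one must verify that the recursive composition of quasi-source operations really does telescope to the correct three-way centre of mass at every Steiner point of $T$.
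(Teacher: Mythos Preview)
Your proposal is correct and follows essentially the same approach as the paper's proof: invoke Lemmas~\ref{qs1}--\ref{qs3} for correctness of each contraction/expansion, check that a contractible Steiner point always exists, and argue that each of the $O(n)$ steps costs $O(1)$. The only differences are cosmetic: you find the contractible Steiner point by taking an internal node of maximum depth in the sink-rooted tree, whereas the paper observes that the subtree induced by the Steiner points has at least two leaves (so at least one avoids the sink); and you manage the processing order with a queue, whereas the paper pre-computes it via a single depth-first search. The additional bookkeeping worry you raise about reconciling quasi-source masses with edge flows is exactly what the definition of a quasi-source (and hence Lemmas~\ref{qs1}--\ref{qs3}) is set up to absorb, so no further argument is needed there.
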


\begin{proof}
We claim that, in Step~2 of the algorithm, if the current tree contains Steiner points then there is at least one Steiner point adjacent to two
terminals, neither of which is the sink. If the tree contains only one Steiner point then the statement is trivial, while if the tree contains
more than one Steiner point, then this follows from the observation  that the subtree induced by the Steiner points contains at least two
vertices of degree~1. Hence we know that such a Steiner point exists at each stage, and the correctness of the algorithm easily follows from
Lemmas~\ref{qs1}, \ref{qs2} and \ref{qs3}.

For the running time, note that the order of replacing Steiner points by quasi-sources can be pre-determined by running a depth-first search on
the topology, which can be done in linear time. The construction of each quasi-source can be done in constant time, using the formulas in
Lemmas~\ref{qs1}, \ref{qs2} and \ref{qs3}, hence Step~2 of the algorithm runs in linear time. Similarly, Step~3, determining the position of
each Steiner point, requires only linear time.
\end{proof}

Finally we note that the methods in this section can easily be extended to allow Steiner points of both degree 2 and 3. The inclusion of higher
degree Steiner points, however, appears to significantly increase the complexity of the problem, and may require a different approach.

\section{Conclusion}
In this paper we introduced a flow-dependent version of the quadratic Steiner tree problem in order to model optimal relay deployment in
wireless networks, specifically networks that transmit in free space or in situations where constructive interference applies. We described some
structural geometric properties of locally minimal solutions to the problem, including properties relating to the degrees and locations of
Steiner points. We did this under various strategies for bounding the number of Steiner points.  Finally, we described a new geometric algorithm
for constructing locally minimal solutions. The algorithm is based on the mass-merging method of mass-point geometry, and runs in linear time,
matching the fastest known algebraic algorithm for the problem.


\end{document}